\newtheorem{theorem}{Theorem}[section]
\newtheorem{proposition}[theorem]{Proposition}
\newtheorem{lemma}[theorem]{Lemma}
\theoremstyle{definition}
\newtheorem{definition}[theorem]{Definition}
\theoremstyle{definition}
\theoremstyle{definition}
\newtheorem{remark}{Remark}
\theoremstyle{definition}
\newtheorem{assumption}{Assumption}
\renewcommand{\epsilon}{\varepsilon}
\title{A Reinforcement Learning Framework for Some\\ Singular Stochastic Control Problems}
\author{Zongxia Liang\thanks{Department of Mathematical Sciences, Tsinghua University, Beijing 100084, People’s Republic of China
  (\url{liangzongxia@tsinghua.edu.cn}).}
  \and  Xiaodong Luo\thanks{Department of Mathematical Sciences, Tsinghua University, Beijing 100084, People’s Republic of China
  (\url{luoxd21@mails.tsinghua.edu.cn}).}
  \and Xiang Yu\thanks{Department of Applied Mathematics, The Hong Kong Polytechnic University, Kowloon, Hong Kong (\url{xiang.yu@polyu.edu.hk})}
}
\date{}  
\numberwithin{equation}{section}
\begin{document}

\maketitle

\vspace{-0.2in}
\begin{abstract}
We develop a continuous-time reinforcement learning framework for a class of singular stochastic control problems without entropy regularization. The optimal singular control is characterized as the optimal singular control law, which is a pair of regions of time and the augmented states. The goal of learning is to identify such an optimal region via the trial-and-error procedure. In this context, we generalize the existing policy evaluation theories with regular controls to learn our optimal singular control law and develop a policy improvement theorem via the region iteration. To facilitate the model-free policy iteration procedure, we further introduce the zero-order and first-order q-functions arising from singular control problems and establish the martingale characterization for the pair of q-functions together with the value function. Based on our theoretical findings, some q-learning algorithms are devised accordingly and a numerical example based on simulation experiment is presented.

\vskip 10 pt \noindent
{\bf Mathematics Subject Classification:}	93E20, 93B47, 49K45,
\vskip 10pt  \noindent
{\bf Keywords:} continuous-time reinforcement learning, singular stochastic control, singular control law, policy improvement, q-learning, martingale characterization
\vskip 5pt\noindent
\end{abstract}

\section{Introduction}
The singular stochastic control problem is an important class of control problems that features the displacement
of the states to be possibly discontinuous, which dates back to the pioneer study of the fuel follower problem of spacecraft; see, e.g., \cite{bather1967sequential}, \cite{benevs1980some}. The main difference between singular control and regular control is that the monotonicity of control is additionally required for singular control, which makes it suitable to model problems that exhibit a monotone or irreversible feature, such as the optimal dividend problem in  \cite{asmussen1997controlled}, \cite{jeanblanc1995optimization}, the irreversible investment or reinsurance in \cite{kogan2001equilibrium}, \cite{yan2022irreversible}, and the extraction of natural resources in \cite{ferrari2021optimal}, among others.

\vskip 5pt

The conventional methods to study the continuous-time stochastic regular control and singular control problems assume the full knowledge of the model. In reality, however, the decision maker may only have a limited or no information about the model or model parameters. Reinforcement learning (RL) provides a powerful way for the agent to learn the optimal control through the trial-and-error procedure. By taking an action and observing the resulting state signal and the reward outcome, the agent can learn to select new actions gradually, eventually approximating the optimal control. Recently, the theoretical foundations for continuous-time RL algorithms in stochastic regular control problems have been developed in a series of studies in  \cite{jia2022policy}, \cite{jia2022policy1}, \cite{jia2023q}, \cite{wang2020reinforcement},\cite{wang2020continuous} in an entropy regularized formulation. In particular, the Shannon entropy is considered therein to encourage the exploration to ensure the optimal policy to be a randomized policy. Later, this exploitation and exploration formulation with entropy regularization has been quickly employed and generalized to address various financial applications with regular controls, to name a few, see \cite{DaiDJZ} for RL method to solve Merton’s utility maximization problem in an incomplete market; \cite{BoHuangYu} for continuous-time q-learning method to address the optimal tracking portfolio problems with state reflections; \cite{WeiYu} and \cite{WYY25} for the continuous-
time q-learning algorithms in mean-field control and mean-field game problems; \cite{Jia24} for the continuous-time risk-sensitive reinforcement learning theories and algorithms; and \cite{HLYZ25} on continuous-time reinforcement learning for optimal switching over multiple regimes.
\vskip 5pt
Despite the fast-growing literature on continuous-time RL for regular control problems with randomized policy, the development of RL methods for singular control problems is underexplored due to the lack of understanding on the proper entropy regularization of the monotone singular controls. Only some related recent studies in the context of optimal stopping can be found, which essentially transform the optimal stopping problems to regular control problems such that the entropy regularization can be exercised to randomize the policy. \cite{dong2024randomized} formulates the randomized stopping problem as a jump intensity control problem and devise the RL algorithm within a regular control framework. \cite{dianetti2024exploratory} formulates the randomization of the stopping time as a singular control problem with residual entropy and introduce the augmented state process as the probability of stopping before time $t$ to facilitate the policy iteration. \cite{dai2024learning} proposes the penalty approximation of the stopping problem first as a regular control problem such that the discrete type entropy regularization can be incorporated therein to encourage the randomization.  

\vskip 5pt

Unlike the optimal stopping problems, the singular stochastic control problem is generically more difficult to formulate a tractable entropy regularization to randomize the singular control process. Even the general randomization is possible, the implicit characterization may not facilitate the task of policy iteration that plays the key role in RL algorithms. In the present paper, we  consider a RL method without entropy regularization, but contribute to the policy iteration via the region iteration arising from the singular control problem and HJB characterization.  We focus on the one-dimensional setting where the singular control is learned by identifying the singular control law, which is the region of time $t$ and the augmented state processes that generates the admissible singular control. First, we extend the policy evaluation of regular control based on q-function and martingale characterization in \cite{jia2023q} to our singular control setting based on the regions of singular control law. Second, we establish a policy improvement theorem based on the region iterations of singular control laws. It is shown that the optimal singular control law can be related to the zero-order and first-order q-functions for singular control problems, where the first-order q-function coincides with the q-function for regular control problems; see \cite{jia2023q}. We establish the martingale characterization of this pair of q-functions and the value function in the context of learning the regions of singular control law. Finally, we devise the continuous-time RL algorithms in both finite-time and infinite-time settings and illustrate the effectiveness of the algorithm to learn the optimal singular control in a concrete example. The singular control policy in the present paper corresponds to the region of singular control law, which is not randomized in our proposed  region iteration method. To encourage the exploration, our algorithm can be combined with other exploration methods such as $\epsilon$-greedy algorithms to help the convergence of iterations towards to the optimal solution. On the other hand, how to design a proper entropy regularization framework to directly randomize the singular control with a tractable characterization or iteration rule is still an open problem, which will be left for the future investigation. 
\vskip 5pt
The rest of the paper is organized as follows. In Section \ref{preli}, we introduce the mathematical setup and preliminary results of the finite time singular control problem, along with the region characterization of the singular control, i.e., the singular control law. Section \ref{theory} establishes the main results for continuous-time RL of the singular control problem, including the PE theorem, PI theorem, and martingale characterization of the pair of q-functions. In Section \ref{algorithm}, we apply the aforementioned q-learning theories to devise some RL algorithms for singular control problems, and one simple numerical example with simulation experiment is presented therein.

\section{Problem Formulation and Singular Control Law}
\label{preli}
The goal of this paper is to develop a reinforcement learning method for a class of singular control problems in a one-dimensional setting originating from the irreversible reinsurance; see, e.g.,  \cite{liang2024equilibria}, \cite{yan2022irreversible}. In this section, we first introduce the mathematical formulation of the singular control problem and recall some preliminary results when the model is fully known, including the verification theorem and a uniqueness result.  

\subsection{Problem formulation and standing assumptions}
We first introduce a class of singular control problems arising from irreversible reinsurance, following the terms and assumptions in \cite{liang2024equilibria}. 

To start with, let $(\Omega,\mathcal{F},\{\mathcal{F}_{t}\}_{t\ge 0}, \mathbb{P})$ be a filtered probability space satisfying the usual conditions, on which there exists a standard one-dimensional Brownian motion $\{B_{t}\}_{t\ge 0}$. The state process under the singular control $\xi$ is denoted by $X^{\xi}=\{X^{\xi}_{t}\}_{t\ge 0}$, which is governed by 
\begin{equation}
\label{dyna}
\begin{cases}
dX^{\xi}_{t}=\mu(X^{\xi}_{t},t,\xi_{t})dt+\sigma(X^{\xi}_{t},t,\xi_{t})dB_{t}-d\xi_{t},\quad \forall t\ge 0,\\
X^{\xi}_{0-}=x_{0},
\end{cases}
\end{equation}
where the singular control $\xi=\{\xi_{t}\}_{t\ge 0}$ is required to be non-decreasing and c\`{a}dl\`{a}g with initial value $\xi_{0-}=0$. The drift $\mu(x,t,y)$ and the volatility $\sigma(x,t,y)$ are deterministic functions, and $\sigma(x,t,y)$ is assumed to be non-negative. In the context of irreversible reinsurance, $X^{\xi}$ and $\xi$ represent the risk exposure and the accumulated reinsurance coverage of risk exposure, respectively.

\begin{remark}
The current method only works for a one-dimensional state process and a one-dimensional singular control process. The extension to the multi-dimensional case will be left for future research. 
\end{remark}

In the finite-horizon setting, the optimal singular control problem is given by
\begin{equation}
\begin{aligned}
\label{object}
&\min_{\xi }J(x,t,y;\xi):=\mathbb{E}_{x,t,y}\bigg[\int_{t}^{T}e^{-\beta(r-t)}H(X^{\xi}_{r},r,\xi_{r})dr+e^{-\beta(T-t)}F(X^{\xi}_{T},\xi_{T})\\&+\int_{t}^{T}e^{-\beta(r-t)}c(X^{\xi}_{r-},r,\xi_{r-})d\xi_{r}\bigg],\quad\forall
(x,t,y)\in\mathcal{Q}:=\mathbb{R}\times[0,T]\times[0,+\infty),
\end{aligned}
\end{equation}
where $T$ is the terminal time horizon and $\beta\geq 0$ denotes the discount rate.

To align with the convention in the RL literature, we shall call $J$ the value function under the singular control $\xi$. 
In Problem (\ref{object}),  $\mathbb{E}_{x,t,y}$ denotes the expectation conditioning on  $X_{t-}=x$, $\xi_{t-}=y$. We also note that 

\begin{equation*}
\int_{t}^{T}\!e^{-\beta(r-t)}c(X^{\xi}_{r-},r,\xi_{r-})d\xi_{r}\!:=\!\int_{t}^{T}\!e^{-\beta(r-t)}c(X^{\xi}_{r},r,\xi_{r})d\xi^{c}_{r}\!+\!\sum\limits_{r\in[t,T]}\!e^{-\beta(r-t)}c(X^{\xi}_{r-},r,\xi_{r-})\Delta\xi_{r} 
\end{equation*}
with $\xi^{c}_{r}$ and $\Delta\xi_{r}:=\xi_{r}-\xi_{r-}$ being the continuous and jump parts of $\xi_{r}$ at time $r$. We also let non-negative functions $H(x,t,y)$, $F(x,y)$ and $c(x,t,y)$ represent the running cost, the terminal cost and control cost, respectively, with $c(x,t,y)$ satisfying the well-posedness constraint \begin{equation*}
c(x-a,t,y+a)=c(x,t,y),\quad \forall a\ge 0, \ \forall (x,t,y)\in\mathcal{Q},
\end{equation*}
and $F$ satisfying that the function $a\mapsto F(x-a,y+a)+c(x,T,y)a$ is convex with finite minimum point. As a result, the optimal singular control exists for the problem at $(x,T,y)$, specifically, the optimal singular control at the terminal time $T $ is given by
\begin{equation*}
\hat{\xi}_{T}=x+\mathop{\arg\min}\limits_{a\ge 0}\big\{F(x-a,y+a)+c(x,T,y)a\big\},
\end{equation*}
and the optimal value function is
\begin{equation*}
J(x,T,y;\hat{\xi})=\tilde{F}(x,y):=\min\limits_{a\ge 0}\big\{F(x-a,y+a)+c(x,T,y)a\big\}.
\end{equation*}

In addition, we also impose the following standing assumption.

\begin{assumption}
\label{as1}
$\mu,\sigma,H,F,c$ are continuous functions. Moreover, $\mu$ and  $\sigma$ have linear growth in $x+y$, and $H,F$ and $c$ have polynomial growth in $x+y$, i.e., for $\varphi_{1}=\mu$,  $\sigma$, and $\varphi_{2}=H$, $F$,  $c$ (where $F(x,t,y):=F(x,y)$), there exist a constant $C>0$ and a positive integer $k\ge 1$ such that
\begin{equation*}
\begin{aligned}
&|\varphi_{1}(x,t,y)|\le C(1+|x+y|),\quad \forall (x,t,y)\in\mathcal{Q},\\
&|\varphi_{2}(x,t,y)|\le C(1+|x+y|^{k}),\quad \forall (x,t,y)\in\mathcal{Q}.
\end{aligned}
\end{equation*}
\end{assumption}

We first have the following estimation result, whose proof is similar to that of  Lemma 2 in \cite{jia2022policy1}.

\begin{lemma}
\label{l1}
Under Assumption \ref{as1}, if an $\{\mathcal{F}_{r}\}_{r\in[t, T]}$-adapted c\`{a}dl\`{a}g process $X=\{X_{r}\}_{r\in[t, T]}$ and an $\{\mathcal{F}_{r}\}_{r\in[t, T]}$-adapted non-decreasing c\`{a}dl\`{a}g process $\xi=\{\xi_{r}\}_{r\in[t, T]}$ satisfy
\begin{equation*}
\begin{cases}
dX_{r}=\mu(X_{r},r,\xi_{r})dr+\sigma(X_{r},r,\xi_{r})dB_{r}-d\xi_{r},\quad r\in[t,T],\\
X_{t-}=x,\quad \xi_{t-}=y,
\end{cases}
\end{equation*}
then for any $k\ge 2$, there exists a $C=C(k)$ such that
\begin{equation*}
\mathbb{E}_{x,t,y}\left \{\max\limits_{r\in[t,T]}|X_{r}+\xi_{r}|^{k}\right\}\le C(1+|x+y|^{k}).
\end{equation*}
\end{lemma}

We next give the definition of admissible singular controls.
\begin{definition}
\label{ads}
An $\{\mathcal{F}_{r}\}_{r\in[t, T]}$-adapted non-decreasing c\`{a}dl\`{a}g process $\xi=\{\xi_{r}\}_{r\in[t, T]}$ is called an admissible singular control if the following properties are satisfied:\\
(a) The stochastic differential equation
	\begin{equation}
    \label{dyna1}
	\begin{cases}
dX^{\xi}_{r}=\mu(X^{\xi}_{r},r,\xi_{r})dr+\sigma(X^{\xi}_{r},r,\xi_{r})dB_{r}-d\xi_{r},\quad \forall r\in[t,T],\\
	X^{\xi}_{t-}=x
	\end{cases}
	\end{equation}
	has a unique strong solution $\{X^{\xi}_{r}\}_{r\in[t,T]}$.\\
	(b) $\{X^{\xi}_{r}\}_{r\in[t,T]}$ given in (a) satisfies
    \begin{equation*}
	\begin{aligned}
	&\mathbb{E}_{x,t,y}\int_{t}^{T}e^{-\beta(r-t)}H(X^{\xi}_{r},r,\xi_{r})dr<\infty,\\
    &\mathbb{E}_{x,t,y}F(X^{\xi}_{T},\xi_{T})<\infty,\\ &\mathbb{E}_{x,t,y}\int_{t}^{T}e^{-\beta(r-t)}c(X^{\xi}_{r-},r,\xi_{r-})d\xi_{r}<\infty,
	\end{aligned}
    \end{equation*}
where $y=\xi_{t-}$ is the initial value of $\xi$.\\
Moreover, we denote the set of all admissible singular controls on $[t, T]$ by $\mathcal{D}_{[t, T]}$.
\end{definition}

If $\mu$ and $\sigma$ are additionally assumed to be Lipschitz w.r.t the first variable, the following result shows that $\xi$ is admissible under some integrability conditions.

\begin{proposition}
\label{propads}
Suppose that $\mu$ and $\sigma$ are Lipschitz in the first variable, then for any $\{\mathcal{F}_{r}\}_{r\in[t,T]}$-adapted non-decreasing c\`{a}dl\`{a}g process $\xi=\{\xi_{r}\}_{r\in[t,T]}$, the SDE (\ref{dyna1}) has a unique strong solution $X^{\xi}$. Moreover, if $\xi$ satisfies that $\mathbb{E}[(\xi_{T}-\xi_{t-})^{\alpha}|\mathcal{F}_{t-}]<\infty$ for some $\alpha>1$, then $\xi$ is admissible.
\end{proposition}
\begin{proof}
Fix $(x,t,y)\in\mathcal{Q}$. Using the transform $Y^{\xi}_{r}:=X^{\xi}_{r}+\xi_{r}$, we rewrite the SDE for $Y^{\xi}$ as
\begin{equation*}
\begin{cases}
dY^{\xi}_{r}(w)=\bar{\mu}(Y^{\xi}_{r}(w),r,w)dr+\bar{\sigma}(Y^{\xi}_{r}(w),r,w)dB_{r}(w),\quad \forall r\in[t,T],\\
Y^{\xi}_{t-}(w)=x+y,
\end{cases}
\end{equation*}
where
\begin{equation*}
\begin{aligned}
\bar{\mu}(y,r,w):=\mu(y-\xi_{r}(w),r,\xi_{r}(w)),\\
\bar{\sigma}(y,r,w):=\sigma(y-\xi_{r}(w),r,\xi_{r}(w)).
\end{aligned}
\end{equation*}
The Lipschitz property of $\mu$ and $\sigma$ yields that
\begin{equation*}
\begin{aligned}
&|\bar{\mu}(y_{1},r,w)-\bar{\mu}(y_{2},r,w)|\le L|y_{1}-y_{2}|,\\
&|\bar{\sigma}(y_{1},r,w)-\bar{\sigma}(y_{2},r,w)|\le L|y_{1}-y_{2}|.
\end{aligned}
\end{equation*}
Furthermore, the linear growth assumption of $\mu$ and $\sigma$ gives that
\begin{equation*}
\begin{aligned}
|\bar{\mu}(y,r,w)|\le C(1+|y|),\\
|\bar{\sigma}(y,r,w)|\le C(1+|y|).
\end{aligned}
\end{equation*}
\textit{(i) Uniqueness:}
Let $Y^{(1)}$ and $Y^{(2)}$ be two strong solutions to the SDE given $\xi$. Then,
\begin{equation*}
\begin{aligned}
&\mathbb{E}\Big[\big[Y^{(1)}_{r}(w)-Y^{(2)}_{r}(w)\big]^{2}\Big|\mathcal{F}_{t-}\Big]\\
=&\mathbb{E}\Big[\big[\int_{t}^{r}(\bar{\mu}(Y^{(1)}_{r'}(w),r',w)-\bar{\mu}(Y^{(2)}_{r'}(w),r',w))dr'+\int_{t}^{r}(\bar{\sigma}(Y^{(1)}_{r'}(w),r',w)-\bar{\sigma}(Y^{(2)}_{r'}(w),r',w))dB_{r'}\big]^{2}\Big|\mathcal{F}_{t-}\Big]\\
\le &2(r-t+1)L^{2}\mathbb{E}\Big[\int_{t}^{r}(Y^{(1)}_{r'}(w)-Y^{(2)}_{r'}(w))^{2}dr'\Big|\mathcal{F}_{t-}\Big],\quad\forall r\in[t,T].
\end{aligned}
\end{equation*}
Then the non-negative function $v(r):=\mathbb{E}\Big[\big[Y^{(1)}_{r}(w)-Y^{(2)}_{r}(w)\big]^{2}\Big|\mathcal{F}_{t-}\Big]$ satisfies
\begin{equation*}
v(r)\le 2(r-t+1)L^{2}\int_{t}^{r}v(r')dr',\quad \forall r\in[t,T].
\end{equation*}
By the Gronwall inequality, we have $v(r)=0$ for any $r\in[t,T]$ and the uniqueness follows.\\

\noindent
\textit{(ii) Existence:} Define $Y^{(0)}_{r}(w)=x+y,\forall r\in[t,T]$ and $Y^{(k)}_{r}(w),k\ge 1$ inductively by
\begin{equation*}
Y^{(k)}_{r}(w):=x+y+\int_{t}^{r}\bar{\mu}(Y^{(k-1)}_{r'}(w),r',w)dr'+\int_{t}^{r}\bar{\sigma}(Y^{(k-1)}_{r'}(w),r',w)dB_{r'},\quad\forall k\ge 1
\end{equation*}
By the adaptiveness of $\xi$, all the $\{Y^{(k)}_{r}(w)\}_{r\in[t,T]}$ for $k\ge 0$ are $\{\mathcal{F}_{r}\}_{r\in[t,T]}$-adapted. 

Similar to the proof of uniqueness, we obtain that $ \forall\  k\ge 1,\  r\in[t,T]$,
\begin{equation}
\label{induct}
\mathbb{E}\Big[\big[Y^{(k+1)}_{r}(w)-Y^{(k)}_{r}(w)\big]^{2}\Big|\mathcal{F}_{t-}\Big]\le 2(r-t+1)L^{2}\int_{t}^{r}\mathbb{E}\Big[\big[Y^{(k)}_{r}(w)-Y^{(k-1)}_{r}(w)\big]^{2}\Big|\mathcal{F}_{t-}\Big].
\end{equation}
By the linear growth property, we have
\begin{equation*}
\begin{aligned}
&\mathbb{E}\Big[\big[Y^{(1)}_{r}(w)-Y^{(0)}_{r}(w)\big]^{2}\Big|\mathcal{F}_{t-}\Big]\\
\le&2\mathbb{E}\Big[\big[\int_{t}^{r}\bar{\mu}(Y^{(0)}_{r'}(w),r',w)dr'\big]^{2}\Big|\mathcal{F}_{t-}\Big]+2\mathbb{E}\Big[\big[\int_{t}^{r}\bar{\sigma}(Y^{(0)}_{r'}(w),r',w)dB_{r'}\big]^{2}\Big|\mathcal{F}_{t-}\Big]\\
\le& 2(r-t+1)\int_{t}^{r}C^{2}(1+x+y)^{2}dr'\\
\le& \tilde{C}(r-t),\quad\forall k\ge 1, r\in[t,T],
\end{aligned}
\end{equation*}
where $\tilde{C}$ is a constant depending on $t,T,C,x,y$. Then, it follows from (\ref{induct}) that
\begin{equation*}
\mathbb{E}\Big[\big[Y^{(k+1)}_{r}(w)-Y^{(k)}_{r}(w)\big]^{2}\Big|\mathcal{F}_{t-}\Big]\le\tilde{C}'^{k+1}\frac{(r-t)^{k+1}}{(k+1)!},
\end{equation*}
where $\tilde{C}'$ is a constant depending on $t,T,C,x,y,L$. Then one can apply the standard argument in \cite{oksendal1992stochastic} for the regular control case to show that $Y^{(k)}_{r}$ is uniformly convergent a.s. on $[t, T]$ and the limit constitutes a strong solution of the SDE.\\

\noindent 
\textit{(iii) Admissibility:} With the moment estimates in Lemma \ref{l1}, the first two inequalities in (b) hold. The last inequality in (b) also holds because
\begin{equation*}
\begin{aligned}
\mathbb{E}_{x,t,y}&\int_{t}^{T}e^{-\beta(r-t)}c(X^{\xi}_{r-},r,\xi_{r-})d\xi_{r}
\le \mathbb{E}_{x,t,y}\bigg\{(\xi_{T}-y)\max\limits_{r\in[t,T]}c(X^{\xi}_{r-},r,\xi_{r-})\bigg\}\\
\le&(\mathbb{E}_{x,t,y}(\xi_{T}-y)^{\alpha})^{\frac{1}{\alpha}}(\mathbb{E}_{x,t,y}\max\limits_{r\in[t,T]}c(X^{\xi}_{r-},r,\xi_{r-})^{\frac{\alpha}{\alpha-1}})^{\frac{\alpha-1}{\alpha}}
<\infty,
\end{aligned}
\end{equation*}
which completes the proof.
\end{proof}

The optimal singular control and the corresponding optimal value function are defined as follows.
\begin{definition}
\label{ops}
Given $(x,t,y)\in\mathcal{Q}$, suppose that $\hat{\xi}$ is an admissible singular control satisfying that, for any admissible $\xi$,
\begin{equation*}
J(x,t,y;\hat{\xi})\le J(x,t,y;\xi),
\end{equation*}
then we call $\hat{\xi}$ an optimal singular control for the problem starting at $(x,t,y)$, and  call $J(x,t,y;\hat{\xi})$ the optimal value function.
\end{definition}

It is important to note that the optimal value function is unique but there might be many optimal singular controls. Next, we introduce the notion of singular control law proposed in \cite{liang2024equilibria}, which will play a core role in our RL algorithms.

\begin{definition}[admissible singular control law]
\label{adl}
Denote $\bar{A}$ and $A^{c}$ the closure and the complement of an arbitrary set $A$. A singular control law $\Xi=(W^{\Xi},(W^{\Xi})^{c})$, with $W^{\Xi}$ being open, is a partition of the space $\mathcal{Q}$. The singular control law $\Xi$ is called admissible if the following two conditions (a) and (b)  hold:\\
 (a) Given any initial $(x,t,y)\in\mathcal{Q}$, the Skorohod reflection problem
	\begin{equation}
	\left\{
	\begin{array}{l}
dX^{\xi}_{r}=\mu(X^{\xi}_{r},r,\xi_{r})dr+\sigma(X^{\xi}_{r},r,\xi_{r})dB_{r}-d\xi_{r},\quad \forall r\in[t,T],\\
	(X^{\xi}_{r},r,\xi_{r})\in\overline{W^{\Xi}},\quad \forall r\in[t,T],\\
	\xi_{r}=y+\int_{t}^{r}1_{\{(X^{\xi}_{u},u,\xi_{u})\in (W^{\Xi})^{c}\}}d\xi_{u},\quad \forall r\in[t,T],\\ 
	X^{\xi}_{t-}=x,\ \xi_{t-}=y
	\end{array}
	\right.
	\label{dynamic1}
	\end{equation}
	has a unique strong solution $(X^{\xi},\xi):=(X^{x,t,y,\Xi},\xi^{x,t,y,\Xi})$. We call $\xi$ the \emph{singular control generated by $\Xi$ at $(x,t,y)$}.\\
	(b) The generated singular control $\{\xi_{r}\}_{r\in[t,T]}$ is admissible in Definition \ref{ads}.

\end{definition}

The above definition allows possibly multiple sub-regions of $W^{\Xi}$ that are not necessarily connected. That is, the boundary curve between $W^{\Xi}$ and $(W^{\Xi})^{c}$ is not restricted to be a single curve.

\begin{remark}
The terminology of ``control law'' has been originally used to denote the feedback controls in the closed-loop formulation for time-inconsistent control problems; see \cite{bjork2017timeinconsistent} or \cite{bjork2021time}. Similar terminology of ``singular control law'' was introduced in \cite{liang2024equilibria}, emphasizing the parallel role of such reflected policy for time-inconsistent singular control problems. For regular controls, the ``control law'' is a function on the state-time space; For singular controls, the ``singular control law'' is a characterization of region in the state-time-control space; both control laws can generate controls through the SDE or the Skorokhod problem. Here, we adopt such terminology for reinforcement learning.
\end{remark}

\begin{remark}
When $\mu$ and $\sigma$ are Lipschitz in the first and last variables, the Skorokhod reflection problem (\ref{dynamic1}) has a unique strong solution under some standard assumptions on the reflection region $W^{\Xi}$. Denote the unit vector of the reflection direction by $\overrightarrow{\kappa}:=(-\frac{1}{\sqrt{2}},0,\frac{1}{\sqrt{2}})$. The main assumptions can be briefly summarized as follows in our case:
\begin{itemize}
\item[(1)] The boundary $\partial W^{\Xi}$ is sufficiently smooth, such that locally it can be transformed into a half-space or by a Lipschitz mapping.
\item[(2)] There exists $\pi:\mathcal{Q}\rightarrow \overline{W^{\Xi}}$ such that if $z:=(x,t,y)\in W^{\Xi}$, then $\pi(z)=z$, and if $z\in (W^{\Xi})^{c}$, then $\pi(z)\in\partial W^{\Xi}$ and $z-\pi(z)=\alpha (-1,0,1)$ for some $\alpha\le 0$.
\item[(3)] The unit inward normal $\overrightarrow{n}(z), \forall z\in \partial W^{\Xi}$ is smooth in $z$, and there exists a positive constant $\epsilon>0$ such that for any $z\in \partial W^{\Xi}$, it holds that  $\Big(\overrightarrow{n}(z),\overrightarrow{\kappa}\Big)\ge \epsilon$.
\end{itemize}
The proof is similar to the one-dimensional reflection case where $\mu$ and $\sigma$ only depend on the state variable $x$, which mainly consists of two steps:\\
\textit{Step 1:} Apply the localization argument in \cite{anderson1976small} and \cite{dupuis1991lipschitz} to show that,  given an arbitrary c\`{a}dl\`{a}g $\mathbb{R}^{3}$ stochastic process $Z$ in $[t,T]$ with initial $Z_{t-}=(x,t,y)$, the Skorokhod reflection problem
\begin{equation*}
\begin{cases}
Z_{r}=Z'_{r}+K_{r}, \quad\forall r\in[t,T],\\
Z'_{r}\in \overline{W^{\Xi}},\quad \forall r\in[t,T],\\
K_{r}=\int_{t}^{r}\overrightarrow{\kappa}d\xi'_{r'},\quad \forall r\in[t,T],\\
\xi'_{r}=\int_{t}^{r}1_{\{Z_{r}\notin W^{\Xi}\}}d\xi'_{r'},\quad \forall r\in[t,T]
\end{cases}
\end{equation*}
has a unique solution $(Z',K)$, and the solution mapping $Z\rightarrow Z'$ is Lipschitz. \\
\textit{Step 2:} For an arbitrary c\`{a}dl\`{a}g $\mathbb{R}^{1}$ stochastic process $\tilde{X}$ with $\tilde{X}_{t-}=x$, take $Z_{r}:=(\tilde{X}_{r},r,y),\forall r\in[t,T]$ in the last step, denote $Z'_{r}=(X_{r},r,\xi_{r})$, then the last step implies that $\tilde{X}_{r}=X_{r}+\xi_{r}-y,\forall r\in[t,T]$, and the mapping $\mathcal{I}:\tilde{X}\rightarrow X$, $\mathcal{J}:\tilde{X}\rightarrow\xi$ are Lipschitz. As a result, the SDE
\begin{equation*}
\begin{cases}
d\tilde{X}_{r}=\mu(\mathcal{I}(\tilde{X})_{r},r,\mathcal{J}(\tilde{X})_{r})dr+\sigma(\mathcal{I}(\tilde{X})_{r},r,\mathcal{J}(\tilde{X})_{r})dB_{r},\quad \forall r\in[t,T],\\
\tilde{X}_{t-}=x
\end{cases}
\end{equation*}
has a unique strong solution $\tilde{X}$. Then $(X,\xi):=(\mathcal{I}(\tilde{X}),\mathcal{J}(\tilde{X}))$ is the unique strong solution to (\ref{dynamic1}).\\
We emphasize that the proof of \textit{Step 1} relies on a localization argument in Section 1.3 of \cite{anderson1976small} and Section 5.4 of \cite{dupuis1991lipschitz}. We refer the detailed smoothness assumptions and proof of \textit{Step 1} to \cite{dupuis1991lipschitz} and \cite{anderson1976small}.

\end{remark}

\begin{remark}
A sufficient condition for $\Xi$ to be admissible is that the solution $(X,\xi)$ in the last remark satisfies $\mathbb{E}[(\xi_{T}-\xi_{t-})^{\alpha}|\mathcal{F}_{t-}]<\infty$ for some $\alpha>1$. This holds when $\mu$, $\sigma$ are bounded, and all local regions where $\partial W^{\Xi}$ can be transformed into a half-space share the same radius. The above sufficient condition ensures that only finite local regions are reached during $[t, T]$, and the $\alpha$-moment for any $\alpha>1$ is finite in each local region using the explicit expression of the $\xi$ process in half-space reflection.
\end{remark}

With a little abuse of notation, we can define the value function of a given singular control law by $J(x,t,y;\Xi)=J(x,t,y;\xi^{x,t,y,\Xi})$ and the optimal singular control law is the one that minimizes $J(x,t,y;\Xi)$ over  the admissible singular control laws.

\subsection{Preliminary results}
In this subsection, we introduce two verification theorems: one for the optimal singular control and one for an arbitrary admissible singular control. These theorems will be useful later in reinforcement learning.

To better state the assumptions for the main results of the paper, we first introduce some regularity conditions.
\begin{definition}
The function set $\mathcal{R}^{\Xi}$ w.r.t a given admissible singular control law $\Xi$, is defined by
\begin{equation*}
\begin{aligned}
\mathcal{R}^{\Xi}:=\{\varphi:\mathcal{Q}\rightarrow\mathbb{R}|&\varphi\in C^{2,1,1}\!(W^{\Xi}\!\cap\!\{(x,t,y)|t<T\})\!\bigcap\! C^{1,0,1}\!((W^{\Xi})^{c}\!\cap\!\{(x,t,y)|t<T\})\!\bigcap\! C(\mathcal{Q}),\\
&\varphi_{x}\ \text{is absolutely continuous in}\ x,\\
&\mathbb{E}_{x,t,y}\int_{t}^{T}\Big[e^{-\beta(r-t)}\varphi_{x}(X_{r}^{\xi},r,\xi_{r})\sigma(X_{r}^{\xi},r,\xi_{r})\Big]^{2}dr<\infty,\ \  \forall(x,t,y)\in\mathcal{Q},\ \forall \xi\in\mathcal{D}_{[t, T]}\}.
\end{aligned}
\end{equation*}
Further, the function set $\mathcal{R}^{*}$ is defined by
\begin{equation*}
\begin{aligned}
\mathcal{R}^{*}:=\{\varphi:\mathcal{Q}\rightarrow\mathbb{R}|&\varphi\in C^{2,1,1}(\mathcal{Q}\cap\{(x,t,y)|t<T\})\bigcap C(\mathcal{Q}),\\
&\mathbb{E}_{x,t,y}\int_{t}^{T}\Big[e^{-\beta(r-t)}\varphi_{x}(X_{r}^{\xi},r,\xi_{r})\sigma(X_{r}^{\xi},r,\xi_{r})\Big]^{2}dr<\infty,\ \  \forall(x,t,y)\in\mathcal{Q},\ \forall \xi\in\mathcal{D}_{[t, T]}\}.
\end{aligned}
\end{equation*}
Clearly, it holds that $\mathcal{R}^{*}\subset\mathcal{R}^{\Xi}$ for any admissible $\Xi$.
\end{definition}

The next result is the verification theorem for the optimal singular control. It states that if the extended HJB equations have a solution with sufficient regularity, then there exists an optimal singular control that is generated by a singular control law. Later, in the RL design, to learn an optimal singular control, it suffices to learn the singular control law that simplifies the policy iterations in the learning procedure.

\begin{theorem}[Verification theorem for optimal singular control]
\label{verif}
Given a function $V\in\mathcal{R}^{*}$, if $\hat{\Xi}=(W^{\hat{\Xi}},(W^{\hat{\Xi}})^{c})$ defined by
\begin{equation*}
W^{\hat{\Xi}}:=\big\{(x,t,y)\in\mathcal{Q}\big|c(x,t,y)-V_{x}(x,t,y)+V_{y}(x,t,y)>0\big\}
\end{equation*}
is an admissible singular control law, and $V$ satisfies 
{\setlength{\abovedisplayskip}{2pt}%
 \setlength{\belowdisplayskip}{2pt}%
\begin{align}
&\!\min\!\bigg\{\!H(x,t,y)\!-\!\beta V(x,t,y)\!+\!V_{t}(x,t,y)\!+\!\mu(x,t,y)V_{x}(x,t,y)\!+\!\frac{1}{2}\sigma^{2}(x,t,y)V_{xx}(x,t,y)\!,\!\label{hjb1}\\
&\qquad\  c(x,t,y)-V_{x}(x,t,y)+V_{y}(x,t,y)\bigg\}=0,\quad \forall (x,t,y)\in\mathcal{Q},\ t<T,\notag\\
&V(x,T,y)=\tilde{F}(x,y),\quad \forall (x,y)\in\mathbb{R}\times[0,+\infty),\label{hjb2}
\end{align}
}
where $\tilde{F}(x,y)=\inf\limits_{a\ge 0}\{F(x-a,y+a)+c(x,T,y)a\}$, then the singular control generated by $\hat{\Xi}$ at $(x,t,y)$, denoted by $\xi^{x,t,y,\Xi}$, is an optimal singular control for Problem (\ref{object}) starting at $(x,t,y)$. Moreover, $V(x,t,y)$ is the resulting optimal value function for Problem (\ref{object}) starting at $(x,t,y)$.
\end{theorem}

\begin{proof}
Fix $(x,t,y)\in\mathcal{Q}$ and denote  $\hat{\xi}:=\xi^{x,t,y,\hat{\Xi}}$,    $X^{\hat{\xi}}:=X^{x,t,y,\hat{\Xi}}$ for notational simplicity. The proof boils down to showing that for any admissible singular control $\xi$, we have $V(x,t,y)\le J(x,t,y;\xi)$ and that the equality holds for $\xi=\hat{\xi}$. The conclusion holds trivially at $t=T$, and it is sufficient to consider $t<T$.

Applying It\^{o}'s formula and taking expectations, we have
\begin{equation*}
\begin{aligned}
&\mathbb{E}_{x,t,y}e^{-\beta(T-t)}V(X^{\xi}_{T},T,\xi_{T})-V(x,t,y)\\
&=\mathbb{E}_{x,t,y}\bigg\{\int_{t}^{T}e^{-\beta(r-t)}\Big[-\beta V(X^{\xi}_{r},r,\xi_{r})+V_{t}(X^{\xi}_{r},r,\xi_{r})+\mu((X^{\xi}_{r},r,\xi_{r}))V_{x}(X^{\xi}_{r},r,\xi_{r})\\
&+\frac{1}{2}\sigma^{2}(X^{\xi}_{r},r,\xi_{r})V_{xx}(X^{\xi}_{r},r,\xi_{r})\Big]dt+\int_{t}^{T}e^{-\beta(r-t)}\Big[-V_{x}(X^{\xi}_{r},r,\xi_{r})+V_{y}(X^{\xi}_{r},r,\xi_{r})\Big]d\xi^{c}_{r}\\
&+\sum\limits_{r\in[t,T]}e^{-\beta(r-t)}\int_{0}^{\Delta\xi_{r}}\Big[-V_{x}(X^{\xi}_{r-}-u,r,\xi_{r-}+u)+V_{y}(X^{\xi}_{r-}-u,r,\xi_{r-}+u)\Big]du\\
&+\int_{t}^{T}\Big[e^{-\beta(r-t)}V_{x}(X^{\xi}_{r},r,\xi_{r})\sigma(X^{\xi}_{r},r,\xi_{r})\Big]dB_{r}\bigg\}.
\end{aligned}
\end{equation*}
Because the last term vanishes for $V\in\mathcal{R}^{*}$, we obtain
\begin{align*}
&V(x,t,y)\\
=&\mathbb{E}_{x,t,y}\bigg\{e^{\!-\!\beta(T\!-\!t)}\tilde{F}(x,y)\!-\!\int_{t}^{T}\!e^{\!-\!\beta(r\!-\!t)}\big[\!-\!\beta V(X^{\xi}_{r},r,\xi_{r})\!+\!V_{t}(X^{\xi}_{r},r,\xi_{r})\!+\!\mu((X^{\xi}_{r},r,\xi_{r}))V_{x}(X^{\xi}_{r},r,\xi_{r})\!\\
&+\frac{1}{2}\sigma^{2}(X^{\xi}_{r},r,\xi_{r})V_{xx}(X^{\xi}_{r},r,\xi_{r})\big]dr-\int_{t}^{T}e^{-\beta(r-t)}\Big[-V_{x}(X^{\xi}_{r},r,\xi_{r})+V_{y}(X^{\xi}_{r},r,\xi_{r})\Big]d\xi^{c}_{r}\\
&-\sum\limits_{r\in[t,T]}e^{-\beta(r-t)}\int_{0}^{\Delta\xi_{r}}\Big[-V_{x}(X^{\xi}_{r-}-u,r,\xi_{r-}+u)+V_{y}(X^{\xi}_{r-}-u,r,\xi_{r-}+u)\Big]du\bigg\}\\
\le &\mathbb{E}_{x,t,y}\bigg\{e^{-\beta(T-t)}F(x,y)+\int_{t}^{T}e^{-\beta(r-t)}H(X^{\xi}_{r},r,\xi_{r})dr+\int_{t}^{T}e^{-\beta(r-t)}c(X^{\xi}_{r},r,\xi_{r})d\xi^{c}_{r}\\
&+\sum\limits_{r\in[t,T]}e^{-\beta(r-t)}\int_{0}^{\Delta\xi_{r}}c(X^{\xi}_{r-}-u,r,\xi_{r-}+u)du\bigg\}.
\end{align*}
The above inequality holds in view that, for any $(x,t,y)\in\mathcal{Q}$,
\begin{equation*}
\begin{aligned}
&\tilde{F}(x,y)\le F(x,y),\\
&H(x,t,y)-\beta V(x,t,y)+V_{t}(x,t,y)+\mu(x,t,y)V_{x}(x,t,y)+\frac{1}{2}\sigma^{2}(x,t,y)V_{xx}(x,t,y)\ge 0,\\
&c(x,t,y)-V_{x}(x,t,y)+V_{y}(x,t,y)\ge 0.
\end{aligned}
\end{equation*}
Noting $c(x-a,t,y+a)=c(x,t,y),\forall a\ge 0$, we obtain
\begin{equation*}
\begin{aligned}
V(x,t,y)\le &\mathbb{E}_{x,t,y}\bigg\{e^{-\beta(T-t)}F(x,y)+\int_{t}^{T}e^{-\beta(r-t)}H(X^{\xi}_{r},r,\xi_{r})dr+\int_{t}^{T}e^{-\beta(r-t)}c(X^{\xi}_{r},r,\xi_{r})d\xi^{c}_{r}\\
&+\sum\limits_{r\in[t,T]}e^{-\beta(r-t)}c(X^{\xi}_{r-},r,\xi_{r-})\Delta\xi_{r}\bigg\}=J(x,t,y;\xi).
\end{aligned}
\end{equation*}
Moreover, for $\xi=\hat{\xi}$, we have
\begin{equation*}
\begin{aligned}
&\tilde{F}(x,y)= F(x,y),\quad \forall (x,y) \  \text{s.t.} \  (x,T,y)\in W^{\hat{\Xi}},\\
&H(x,t,y)-\beta V(x,t,y)+V_{t}(x,t,y)+\mu(x,t,y)V_{x}(x,t,y)+\frac{1}{2}\sigma^{2}(x,t,y)V_{xx}(x,t,y)=0,\\
&\forall (x,t,y)\in W^{\hat{\Xi}},\  t<T,\\
&c(x,t,y)-V_{x}(x,t,y)+V_{y}(x,t,y)=0,\quad \forall  (x,t,y)\in (W^{\hat{\Xi}})^{c},\ t<T,
\end{aligned}
\end{equation*}
which completes the proof.
\end{proof}

It is expected that the regularity conditions in $\mathcal{R}^{*}$ may not hold for the value function of an arbitrary admissible control law. We need to use the It\^{o}-Tanaka-Meyer's formula to obtain a counterpart of the variational equation with $V$ replaced by $J(\cdot,\cdot,\cdot;\Xi)$ for an admissible control law $\Xi$ under the weakened smoothness condition in $\mathcal{R}^{\Xi}$.

\begin{theorem}
\label{verif-a}
Let $\Xi$ be an admissible singular control law. If a function $\tilde{J}(\cdot,\cdot,\cdot)\in\mathcal{R}^{\Xi}$ satisfies 
\begin{equation}
\label{eqad}
\begin{aligned}
&H(x,t,y)-\beta \tilde{J}(x,t,y)+\tilde{J}_{t}(x,t,y)+\mu(x,t,y)\tilde{J}_{x}(x,t,y)\\
&+\frac{1}{2}\sigma^{2}(x,t,y)\tilde{J}_{xx}(x,t,y)=0,\quad\quad\forall(x,t,y)\in W^{\Xi},\\
&c(x,t,y)-\tilde{J}_{x}(x,t,y)+\tilde{J}_{y}(x,t,y)=0,\quad\quad \forall(x,t,y)\in (W^{\Xi})^{c},\\
&\tilde{J}(x,T,y)=\bar{F}(x,y;\Xi),
\end{aligned}
\end{equation}
where $\bar{F}(x,y;\Xi):=F(x-D(x,y;\Xi),y+D(x,y;\Xi))+c(x,T,y)D(x,y;\Xi)$ with $D(x,y;\Xi):=\inf\{a\ge 0|(x-a,T,y+a)\in W^{\Xi}\}$, then $\tilde{J}$ is the value function of  $\Xi$, i.e., $\tilde{J}(x,t,y)=J(x,t,y;\Xi)$ for any $(x,t,y)\in\mathcal{Q}$.
\end{theorem}
\begin{proof}
The proof boils down to showing that the counterpart of the variational equation with $V$ replaced by $\tilde{J}(\cdot,\cdot,\cdot)$ still holds. Recall that the variational equation is deduced by applying It\^{o}'s formula to the continuous difference of $V(X^{\Xi}_{r},r,\xi^{\Xi})$. Under the weakened regularity condition in $\mathcal{R}^{\Xi}$, we can instead apply the It\^{o}-Tanaka-Meyer's formula to obtain 
\begin{align*}
&\tilde{J}(X^{\Xi}_{T-},T,\xi^{\Xi}_{T-})-\tilde{J}(x,t,y)\\
=&\sum\limits_{r\in[t,T)}\Big[\tilde{J}(X^{\Xi}_{r},r,\xi^{\Xi}_{r})-\tilde{J}(X^{\Xi}_{r-},r,\xi^{\Xi}_{r-})\Big]+\int_{t}^{T}\Big[-\beta \tilde{J}(X^{\Xi}_{r},r,\xi^{\Xi}_{r})+\tilde{J}_{t}(X^{\Xi}_{r},r,\xi^{\Xi}_{r})\\&+\mu(X^{\Xi}_{r},r,\xi^{\Xi}_{r})\tilde{J}_{x}(X^{\Xi}_{r},r,\xi^{\Xi}_{r})\Big]dr+\int_{t}^{T}\sigma (X^{\Xi}_{r},r,\xi^{\Xi}_{r})\tilde{J}_{x}(X^{\Xi}_{r},r,\xi^{\Xi}_{r})dB_{r}\\
&+\int_{t}^{T}\Big[-\tilde{J}_{x}(X^{\Xi}_{r},r,\xi^{\Xi}_{r})+\tilde{J}_{y}(X^{\Xi}_{r},r,\xi^{\Xi}_{r})\Big]d(\xi^{\Xi})^{c}_{r}+\int_{-\infty}^{\infty}\Lambda_{T}(a)\tilde{\mu}(da)\\
=&\sum\limits_{r\in[t,T)}\int_{0}^{\Delta\xi^{\Xi}_{r}}\Big[-\tilde{J}_{x}(X^{\Xi}_{r}-u,r,\xi^{\Xi}_{r}+u)+\tilde{J}_{y}(X^{\Xi}_{r}-u,r,\xi^{\Xi}_{r}+u)\Big]du\\&+\int_{t}^{T}\Big[-\beta \tilde{J}(X^{\Xi}_{r},r,\xi^{\Xi}_{r})+\tilde{J}_{t}(X^{\Xi}_{r},r,\xi^{\Xi}_{r})+\mu(X^{\Xi}_{r},r,\xi^{\Xi}_{r})\tilde{J}_{x}(X^{\Xi}_{r},r,\xi^{\Xi}_{r})\\&+\frac{1}{2}\sigma^{2}(X^{\Xi}_{r},r,\xi^{\Xi}_{r})\tilde{J}_{xx}(X^{\Xi}_{r},r,\xi^{\Xi}_{r})\Big]dr+\int_{t}^{T}\sigma (X^{\Xi}_{r},r,\xi^{\Xi}_{r})\tilde{J}_{x}(X^{\Xi}_{r},r,\xi^{\Xi}_{r})dB_{r}\\
&+\int_{t}^{T}\Big[-\tilde{J}_{x}(X^{\Xi}_{r},r,\xi^{\Xi}_{r})+\tilde{J}_{y}(X^{\Xi}_{r},r,\xi^{\Xi}_{r})\Big]d(\xi^{\Xi})^{c}_{r},\ \text{a.s.}, 
\end{align*}
where $\Lambda$ is the local time of $\big\{(X^{\Xi})^{c}_{r}\big\}_{r\in[t,T]}$ and $\tilde{\mu}$ is the second derivative measure of $\tilde{J}(\cdot,\cdot,\cdot)$; see Sect. 3.7 of \cite{karatzas1998brownian}. Then, taking expectations on both sides of the above equation yields the conclusion.
\end{proof}

\begin{remark}
The $C^{2,1,1}\!(W^{\Xi}\!\cap\!\{(x,t,y)|t<T\})\!\bigcap\! C^{1,0,1}\!((W^{\Xi})^{c}\!\cap\!\{(x,t,y)|t<T\})\!\bigcap\! C\!(\mathcal{Q})$ regularity with absolute continuous first-order derivative, as required in $\mathcal{R}^{\Xi}$, is a natural regularity condition of the value function. Specifically, the value function usually has at most the first-order smooth fit at the boundary; see our example in Subsection \ref{4case}. For the problem that admits an explicit solution to (\ref{eqad}), one can verify the regularity of the value function as the solution is smooth in the waiting region and the action region with a first-order smooth fit across the boundary. For a more general problem, one may verify the regularity of the value function under some model assumptions; see our next result as an example.
\end{remark}

\begin{proposition}
If there exist $\alpha\in(0,1), C>0$ and $\kappa(x,t)$ in $L^{1}((-\infty,0])$ for each fixed $t$, such that the waiting region of $\Xi$ is given by $W^{\Xi}=\{(x,t,y)|x<\Gamma(t)\}$ for some $\Gamma\in C^{1+\frac{\alpha}{2}}([0,T])$, and 
\begin{align*}
&\mu(x,t,y)=\mu(x,t)\in C^{1+\alpha,\frac{\alpha}{2}},\quad  \sigma(x,t,y)=\sigma(x,t)\in C^{1+\alpha,\frac{\alpha}{2}},\quad \sigma(x,t,y)>0,\\
&H(x,t,y)=H(x,t)\in C^{1+\alpha,\frac{\alpha}{2}},\quad
 c(x,t,y)=c(t)\in C^{1+\frac{\alpha}{2}}([0,T]),\\
&F(x,y)=F(x)\in C^{3+\alpha}(\mathbb{R})\ {\text with}\ \lim\limits_{x\rightarrow-\infty}F'(x)=0,\\
&c(t),F(x)\in[0,C\kappa(x,t)],\quad
\mu_{x}(x,t)<\beta,\quad
 H_{x}(x,t)>0,\\
&\frac{1}{C}H_{x}(x+\Gamma(T-t),T-t)+[\mu_{x}(x+\Gamma(T-t),T-t)-\beta] \kappa(x,t)+\kappa_{t}(x,t)\\&+[\mu(x+\Gamma(T-t),T-t)+\sigma(x+\Gamma(T-t),T-t)\sigma_{x}(x+\Gamma(T-t),T-t)-\Gamma'(T-t)]\kappa_{x}(x,t)\\
&+\frac{1}{2}\sigma^{2}(x+\Gamma(T-t),T-t)\kappa_{xx}(x,t)<0,\forall x\in(-\infty,0),
\end{align*}
and assume that $\kappa(x,t),\sigma(x,t)$ have polynomial growth in $x$. Then the value function $J(x,t,y;\Xi)$ is in $C^{2,1,1}\!(W^{\Xi}\!\cap\!\{(x,t,y)|t<T\})\!\bigcap\! C^{1,0,1}\!((W^{\Xi})^{c}\!\cap\!\{(x,t,y)|t<T\})\!\bigcap\! C\!(\mathcal{Q})$.
\end{proposition}
\begin{proof}
By the independence of $y$, we have $J(x,t,y;\Xi)=J(x,t;\Xi)$. Denote $\mathcal{W}^{\Xi}=\{(x,t)\in\mathbb{R}\times[0,T)|x<\Gamma(t)\}$ and $(\mathcal{W}^{\Xi})^{c}=\{(x,t)\in\mathbb{R}\times[0,T)|x\ge\Gamma(t)\}$.

The equation for $v(x,t)=J_{x}(x,T-t;\Xi),(x,t)\in \mathbb{R}\times[0,T)$ is given by
\begin{equation*}
\begin{aligned}
&H_{x}(x,T-t)+[\mu_{x}(x,T-t)-\beta] v(x,t)+v_{t}(x,t)+[\mu(x,T-t)+\sigma(x,T-t)\sigma_{x}(x,T-t)]v_{x}(x,t)\\
&+\frac{1}{2}\sigma^{2}(x,T-t)v_{xx}(x,t)=0,\quad\forall(x,t)\in (-\infty,\Gamma(T-t))\times[0,T),\\
&c(T-t)-v(x,t)=0,\quad \forall(x,t)\in (\Gamma(T-t),+\infty)\times[0,T),\\
&v(x,0)=\bar{F}_{x}(x;\Xi),
\end{aligned}
\end{equation*}
where $\bar{F}(x;\Xi)=F(x)1_{x<\Gamma(T)}+[F(\Gamma(T))+c(T)(x-\Gamma(T))]1_{x\ge \Gamma(T)}$. We emphasize that $\bar{F}_{x}(x;\Xi)$ can be discountinuous at $x=\Gamma(T)$.

The equation for $g(x,t)=v(x+\Gamma(T-t),t),(x,t)\in \mathbb{R}\times[0,T)$ is given by 
\begin{equation*}
\begin{aligned}
&H_{x}(x+\Gamma(T-t),T-t)+[\mu_{x}(x+\Gamma(T-t),T-t)-\beta] g(x,t)+g_{t}(x,t)\\&+[\mu(x+\Gamma(T-t),T-t)+\sigma(x+\Gamma(T-t),T-t)\sigma_{x}(x+\Gamma(T-t),T-t)-\Gamma'(T-t)]g_{x}(x,t)\\
&+\frac{1}{2}\sigma^{2}(x+\Gamma(T-t),T-t)g_{xx}(x,t)=0,\quad\forall(x,t)\in (-\infty,0)\times[0,T),\\
&c(T-t)-g(x,t)=0,\quad \forall(x,t)\in [0,+\infty)\times[0,T),\\
&g(x,0)=\bar{F}_{x}(x+\Gamma(T);\Xi),
\end{aligned}
\end{equation*}

For fixed $n>0$, by the Schauder theory for the first initial-boundary value problem (Theorem 1.16 of \cite{wang2021nonlinear}), there exists a unique $g^{(n)}\in C^{2,1}((-n,0)\times[0, T))\times C([-n,0]\times[0, T])$ solving
\begin{equation}
\label{eqg}
\begin{aligned}
&H_{x}(x+\Gamma(T-t),T-t)+[\mu_{x}(x+\Gamma(T-t),T-t)-\beta] g(x,t)+g_{t}(x,t)\\&+[\mu(x+\Gamma(T-t),T-t)+\sigma(x+\Gamma(T-t),T-t)\sigma_{x}(x+\Gamma(T-t),T-t)-\Gamma'(T-t)]g_{x}(x,t)\\
&+\frac{1}{2}\sigma^{2}(x+\Gamma(T-t),T-t)g_{xx}(x,t)=0,\quad\forall(x,t)\in (-n,0)\times[0,T),\\
&g(0,t)=c(T-t),\quad g(-n,t)=F_{x}(-n+\Gamma(T);\Xi),\\
&g(x,0)=F^{(n)}_{x}(x+\Gamma(T);\Xi),\quad\forall x\in[-n,0]
\end{aligned}
\end{equation}
where $F^{(n)}_{x}$ is a smooth function in $C^{2+\alpha}([-n,0])$ such that $F^{(n)}_{x}=F_{x}$ in $[-n+\Gamma(T),-\frac{1}{n}+\Gamma(T)]$, $F^{(n)}_{x}(\Gamma(T))=c(T)$ and $F^{(n)}_{x}(x)$ remains between $c(T)$ and $F^{(n)}_{x}(-\frac{1}{n}+\Gamma(T))$ during $x\in[-\frac{1}{n}+\Gamma(T),\Gamma(T)]$. Moreover, it follows from the comparison principle that
\begin{equation*}
0\le g^{(n)}(x,t)\le C\kappa(x,t).
\end{equation*}

Take arbitrary sequences of numbers $a^{m}\downarrow-\infty, b^{m}\uparrow0,c^{m}\downarrow0,d^{m}\uparrow T$ and denote $K^{m}:=[a^{m},b^{m}]\times[c^{m},d^{m}]$. Then for any $n\ge -a^{m}$, we have $g^{(n)}\in C^{2,1}(K^{m})$. By the interior Schauder estimate (Theorem 1.19 of \cite{wang2021nonlinear}), we get that, for any $n\ge -a^{m+1}$,
\begin{align*}
\Vert g^{(n)}\Vert_{C^{2+\alpha,1+\frac{\alpha}{2}}(K^{m})}\le& \tilde{C}(\Vert H_{x}(x+\Gamma(T-t),T-t)\Vert_{C^{\alpha,\frac{\alpha}{2}}(K^{m+1})}+\Vert g^{(n)}\Vert_{L^{2}(K^{m+1})})\\
\le&\tilde{C}(\Vert H_{x}(x+\Gamma(T-t),T-t)\Vert_{C^{\alpha,\frac{\alpha}{2}}(K^{m+1})}+\Vert \kappa\Vert_{L^{2}(K^{m+1})})
\end{align*}
where $\tilde{C}$ is some constant depending on $T$, the uniform parabolic bounds of $\sigma(x,t)$ on $K^{m+1}$, the $C^{\alpha,\frac{\alpha}{2}}$ norm of coefficients and the distance between $K^{m}$ and $\partial_{p}K^{m+1}$, but independent of $n$. Then we can use the standard diagonal argument to find a subsequence $g^{(n)}$ that converges in $C^{2+\alpha,1+\frac{\alpha}{2}}(K^{m})$ for any $m$. Denote the limit of $g^{(n)}$ by $g$ and extend $g(x,t)=c(T-t)$ for $(x,t)\in[0,+\infty)\times[0,T)$, then $g$ is a $C^{2+\alpha,1+\frac{\alpha}{2}}((-\infty,0)\times(0,T))\bigcap C\big((\mathbb{R}\times[0,T])\setminus\{(0,0)\} \big)$ solution to (\ref{eqg}). Moreover, $g(x,t)\le C\kappa(x,t)$ is in $L^{1}((-\infty,0])$ for each fixed $t$. Define $J(x,t;\Xi):=\int_{-\infty}^{x}v(x',T-t)dx'=\int_{-\infty}^{x}g(x'-\Gamma(t),T-t)dx'$, then $J\in C^{3,1}(\mathcal{W}^{\Xi})\bigcap C^{\infty,0}((\mathcal{W}^{\Xi})^{c})\bigcap C^{1,0}\big((\mathbb{R}\times[0,T])\setminus\{(\Gamma(T),T)\}\big)$. Moreover, for any $t$,  $|f^{-}_{xx}(\Gamma(t),t)|<\infty$ while $f^{+}_{xx}(\Gamma(t),t)=0$. It holds that, for fixed $t$, the function $J_{x}(x,t;\Xi)$ is Lipschitz in the neighborhood of $x=\Gamma(t)$ and is thus absolutely continuous. By the verification theorem \ref{verif-a}, $J$ is the value function of $\Xi$, which completes the proof. 
\end{proof}

Comparing with the verification of an admissible singular control law, we require more restrictive sufficient conditions to verify the optimal singular control law. First, additional inequalities in (\ref{hjb1}) are required. Second, the smoothness of the candidate value function is mandated to be $C^{2,1,1}$ on the whole space $\mathcal{Q}\cap\{(x,t,y)|t<T\}$. Moreover, a stronger integrability condition is needed, as the integrability condition can be weakened from that in $\mathcal{R}^{\Xi}$ to
\begin{equation*}
\mathbb{E}_{x,t,y}\!\int_{t}^{T}\!\Big[e^{-\beta(r-t)}\varphi_{x}(X_{r}^{x,t,y,\Xi},r,\xi^{x,t,y,\Xi}_{r})\sigma(X_{r}^{x,t,y,\Xi},r,\xi^{x,t,y,\Xi}_{r})\Big]^{2}dr<\infty,\ \forall(x,t,y)\in\mathcal{Q}
\end{equation*}
for the proof of Theorem \ref{verif-a}.

In the following sections, we are interested in the situation when the model coefficients $\mu,\sigma,H,F,c,\beta$ are unknown. The agent needs to learn the optimal singular control via the interactions with the environment. 

\section{RL Approach for Some Singular Control Problems}
\label{theory}
In this section, we introduce the theoretical foundations of our RL algorithms in a manner of deterministic policy iterations. In particular, we consider the formulation and policy iteration without the entropy regularization. The goal is to learn the value function $V$ satisfying the verification theorem (Theorem \ref{verif}), assuring its role as the desired optimal value function. Given the learned optimal value function $V$, we can construct the optimal singular control law $\hat{\Xi}$ that generates an optimal singular control as in Theorem \ref{verif}.

Recall that in our framework, the optimal singular control law $\hat{\Xi}$ rather than the optimal singular control has a feedback form. Therefore, we expect that it is more convenient and effective to implement the policy evaluation and iteration over the singular control law.

\subsection{Policy evaluation and martingale characterization}
This subsection focuses on the policy evaluation (PE) of the singular control laws.  We fix $(x,t,y)\in\mathcal{Q}$ and denote $(X^{\Xi},\xi^{\Xi}):=(X^{x,t,y,\Xi},\xi^{x,t,y,\Xi})$ for any admissible $\Xi$. The policy evaluation step is to approximate the value function of the singular control generated by $\Xi$,  i.e.,   $J(x,t,y;\Xi)$.  We have the following result that links the value function of $\Xi$ to a martingale process.

\begin{theorem}
\label{PET}
Let $\Xi$ be an admissible singular control law and $\tilde{J}(\cdot,\cdot,\cdot)$ be a given function. Then the following statements (1)-(3) are equivalent:\\
(1) $\tilde{J}(\cdot,\cdot,\cdot)$ equals the value function of $\Xi$, i.e., $\tilde{J}(x,t,y)=J(x,t,y;\Xi)$ for any $(x,t,y)\in\mathcal{Q}$.\\
(2) $\tilde{J}(x,T,y)=\bar{F}(x,y;\Xi)$ and the process
\begin{equation*}
\begin{aligned}
M=\bigg\{M_{r}&:=&e^{-\beta(r-t)}\tilde{J}(X^{\Xi}_{r},r,\xi^{\Xi}_{r})+\int_{t}^{r}e^{-\beta(r'-t)}H(X^{\Xi}_{r'},r',\xi^{\Xi}_{r'})dr'\\
&& +\int_{t}^{r}e^{-\beta(r'-t)}c(X^{\Xi}_{r'-},r',\xi^{\Xi}_{r'-})d\xi^{\Xi}_{r'},\ r\in[t,T]\bigg \} 
\end{aligned}
\end{equation*}
is an $\{\mathcal{F}_{r}\}_{r\in [t, T]}$-martingale.\\
(3) $\tilde{J}(x,T,y)=\bar{F}(x,y;\Xi)$ and the process
\begin{equation*}
\begin{aligned}
M_{.-}=\bigg\{M_{r-}\!\!&:=&\!\!e^{-\beta(r-t)}\tilde{J}(X^{\Xi}_{r-},r,\xi^{\Xi}_{r-})+\int_{t}^{r}e^{-\beta(r'-t)}H(X^{\Xi}_{r'},r',\xi^{\Xi}_{r'})dr'\\
&&+\int_{t}^{r-}e^{-\beta(r'-t)}c(X^{\Xi}_{r'-},r',\xi^{\Xi}_{r'-})d\xi^{\Xi}_{r'},\ r\in[t,T]\bigg\}
\end{aligned}
\end{equation*}
 is an $\{\mathcal{F}_{r-}\}_{r\in[t,T]}$-martingale, where the last term is defined by
\begin{equation*}
\int_{t}^{r-}e^{-\beta(r'-t)}c(X^{\Xi}_{r'-},r',\xi^{\Xi}_{r'-})d\xi^{\Xi}_{r'}:=\int_{t}^{r}e^{-\beta(r'-t)}c(X^{\Xi}_{r'-},r',\xi^{\Xi}_{r'-})d(\xi^{\Xi})^{c}_{r'}+\sum\limits_{r'\in[t,r)}c(X^{\Xi}_{r'-},r',\xi^{\Xi}_{r'-})\Delta\xi^{\Xi}_{r'}.
\end{equation*}
\end{theorem}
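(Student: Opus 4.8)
The plan is to prove the cycle of implications $(1)\Rightarrow(2)\Rightarrow(3)\Rightarrow(1)$, with $(1)\Rightarrow(2)$ carrying the analytic content and $(2)\Leftrightarrow(3)$ being essentially a bookkeeping identity between the c\`adl\`ag process and its left-continuous modification. For $(1)\Rightarrow(2)$, assume $J(\cdot,\cdot,\cdot;\Xi)$ is the value function of $\Xi$. First I would record the terminal condition $J(x,T,y;\Xi)=\tilde{F}(x,y)$, which follows from the explicit optimizer at time $T$ described before Assumption \ref{as1} together with the definition of the generated control at the terminal time. Then I would establish the martingale property of $M$ by the usual flow/tower argument: for $t\le r\le r'\le T$, condition on $\mathcal{F}_{r}$ and use the uniqueness of the strong solution in Definition \ref{adl}(a), so that the control generated by $\Xi$ from $(X^{\Xi}_{r},r,\xi^{\Xi}_{r})$ is exactly the time-shifted tail of $\xi^{\Xi}$; combined with the tower property of conditional expectation and the additivity of the running cost and control cost integrals, this yields $\mathbb{E}[M_{r'}\mid\mathcal{F}_{r}]=M_{r}$. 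Integrability of $M_{r}$ for each $r$ follows from Proposition \ref{propads}(b) applied on $[r,T]$ together with the polynomial growth of $J$ (which, under Assumption \ref{as2}(3), inherits polynomial growth from the cost data) and the finite-moment estimates for $X^{\Xi}+\xi^{\Xi}$.

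For $(2)\Rightarrow(3)$, I would argue that $M_{r-}$ is the left-continuous version of $M_{r}$ at those $r$ where $\xi^{\Xi}$ jumps and equals $M_{r}$ otherwise; precisely, unwinding the definition of the modified last integral, one checks $M_{r-}=M_{r}-\big(M_{r}-M_{r-}\big)$ where the jump $M_{r}-M_{r-}=e^{-\beta(r-t)}\big[J(X^{\Xi}_{r},r,\xi^{\Xi}_{r};\Xi)-J(X^{\Xi}_{r-},r,\xi^{\Xi}_{r-};\Xi)+c(X^{\Xi}_{r-},r,\xi^{\Xi}_{r-})\Delta\xi^{\Xi}_{r}\big]$. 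Since the pre-jump data $(X^{\Xi}_{r-},\xi^{\Xi}_{r-})$ are $\mathcal{F}_{r-}$-measurable and the jump is triggered by the deterministic region $\Xi$ (hence also $\mathcal{F}_{r-}$-measurable, given $\Xi$ is fixed), the jump term is $\mathcal{F}_{r-}$-measurable; the $\{\mathcal{F}_{r-}\}$-martingale property of $M_{.-}$ then follows from the optional-sampling/predictable-projection relation between a c\`adl\`ag martingale and its left limits, using that all jump times of $\xi^{\Xi}$ are, up to the region structure, accessible. The converse $(3)\Rightarrow(1)$: from the $\{\mathcal{F}_{r-}\}$-martingale property and the terminal value, evaluate the martingale equality between $r=t$ and $r=T$ (taking the left limit at $T$ and then adding back the terminal jump, which is handled by the terminal condition $J(x,T,y;\Xi)=\tilde F(x,y)$ and the explicit optimizer at $T$), obtaining $J(x,t,y;\Xi)=\mathbb{E}_{x,t,y}\big[e^{-\beta(T-t)}\tilde F(X^{\Xi}_{T-},\xi^{\Xi}_{T-})+\int_t^T e^{-\beta(r-t)}H\,dr+\int_t^T e^{-\beta(r-t)}c\,d\xi^{\Xi}_r\big]$, which is exactly $J(x,t,y;\xi^{\Xi})$ by the definition of the cost functional in \eqref{object}; uniqueness of the value function attached to a fixed $\Xi$ then identifies this with the value function of $\Xi$.

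The main obstacle I anticipate is the careful handling of the jump terms and the distinction between the two filtrations $\{\mathcal{F}_{r}\}$ and $\{\mathcal{F}_{r-}\}$ in the equivalence $(2)\Leftrightarrow(3)$: one must make sure that the ``modified'' stochastic integral $\int_t^{r-}e^{-\beta(r'-t)}c\,d\xi^{\Xi}_{r'}$ really does produce a process adapted to the left-continuous filtration, that its jumps align with those of $J(X^{\Xi}_{\cdot-},\cdot,\xi^{\Xi}_{\cdot-};\Xi)$ so the sum has the predictable-projection property, and that no integrability is lost when passing to left limits (again covered by Proposition \ref{propads}(b) and the growth bounds). A secondary technical point is that the generated control $\xi^{\Xi}$ may have a nontrivial continuous singular part, so the argument above must be phrased in terms of the It\^o--Tanaka--Meyer decomposition already used in the proof of Theorem \ref{verif-a} rather than a plain It\^o expansion; but since statements (1)--(3) are purely about $J(\cdot,\cdot,\cdot;\Xi)$ as a given function and not about its smoothness, the flow/tower argument for $(1)\Rightarrow(2)$ in fact sidesteps the need for any regularity of $J$ and is the cleanest route.
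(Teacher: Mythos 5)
Your proposal is correct and follows essentially the same route as the paper: the martingale property comes from the tower/flow property of the generated state--control pair (uniqueness of the Skorohod solution), and the converse follows by evaluating the martingale identity at $r=t$ together with the terminal condition, exactly as in the paper's proof of $(1)\Leftrightarrow(3)$. The only difference is bookkeeping: the paper dispatches $(2)\Leftrightarrow(3)$ with the one-line remark $M_{r-}=\lim_{r'\uparrow r}M_{r'}$, whereas you spell this out via uniform integrability and predictability of deterministic times (your appeal to accessibility of the jump times of $\xi^{\Xi}$ is more than is needed, since the martingale property is only tested at fixed times, but it is not incorrect).
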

\begin{proof}
The $\{M_{r}\}_{r\in[t,T]}$ in (2) and the $\{M_{r-}\}_{r\in[t,T]}$ in (3) satisfies $M_{r-}=\lim\limits_{r'\uparrow r}M_{r}$, which naturally implies $(2)\Rightarrow(3)$. We need to show $(3)\Rightarrow(1)$ and $(1)\Rightarrow(2)$.

For the direction (3)$\Rightarrow$(1), if $\tilde{J}(x,T,y)=\bar{F}(x,y;\Xi)$ and $M_{.-}$ is  an $\{\mathcal{F}_{r-}\}_{r\in[t,T]}$-martingale, then
\begin{equation*}
\mathbb{E}\Big[M_{T-}\Big|\mathcal{F}_{r-}\Big]=M_{r-},
\end{equation*}
and
\begin{equation*}
\begin{aligned}
M_{T}\!-\!M_{T-}\!=&e^{-\beta(T-t)}\tilde{J}(X^{\Xi}_{T},T,\xi^{\Xi}_{T})-e^{-\beta(T-t)}\tilde{J}(X^{\Xi}_{T-},T,\xi^{\Xi}_{T-})+e^{-\beta(T-t)}c(X^{\Xi}_{T-},T,\xi^{\Xi}_{T-})\Delta\xi^{\Xi}_{T}\\
\!=&e^{\!-\beta(T\!-\!t)}\Big[F(X^{\Xi}_{T},\xi^{\Xi}_{T})\!-\!\big[F(X^{\Xi}_{T-}\!\!-\!\Delta\xi^{\Xi}_{T},\xi^{\Xi}_{T-}\!+\!\Delta\xi^{\Xi}_{T})+c(X^{\Xi}_{T-},T,\xi^{\Xi}_{T-})\Delta\xi^{\Xi}_{T}\big]\!-\!c(X^{\Xi}_{T-},T,\xi^{\Xi}_{T-})\Delta\xi^{\Xi}_{T}\Big]\\
\!=&0,
\end{aligned}
\end{equation*}
which implies
\begin{equation*}
\begin{aligned}
&e^{-\beta(r-t)}\tilde{J}(X^{\Xi}_{r-},r,\xi^{\Xi}_{r-})+\int_{t}^{r}e^{-\beta(r'-t)}H(X^{\Xi}_{r'},r',\xi^{\Xi}_{r'})dr'+\int_{t}^{r-}e^{-\beta(r'-t)}c(X^{\Xi}_{r'-},r',\xi^{\Xi}_{r'-})d\xi^{\Xi}_{r'}\\
&=\mathbb{E}\Big[e^{-\beta(T-t)}F(X^{\Xi}_{T},\xi^{\Xi}_{T})+\int_{t}^{T}e^{-\beta(r'-t)}H(X^{\Xi}_{r'},r',\xi^{\Xi}_{r'})dr'+\int_{t}^{T}e^{-\beta(r'-t)}c(X^{\Xi}_{r'-},r',\xi^{\Xi}_{r'-})d\xi^{\Xi}_{r'}\Big|\mathcal{F}_{r-}\Big].
\end{aligned}
\end{equation*}
Letting $r=t$ yields
\begin{equation*}
\tilde{J}(x,t,y)=\mathbb{E}_{x,t,y}\bigg[\int_{t}^{T}e^{\beta(r'-t)}H(X^{\Xi}_{r'},r',\xi^{\Xi}_{r'})dr'+e^{\beta(T-t)}F(X^{\Xi}_{T},\xi^{\Xi}_{T})+\int_{t}^{T}e^{\beta(r'-t)}c(X^{\Xi}_{r'-},r',\xi^{\Xi}_{r'-})d\xi^{\Xi}_{r'}\bigg],
\end{equation*}
which coincides with the definition of $J(\cdot,\cdot,\cdot;\Xi)$.

For the direction (1)$\Rightarrow$(2), if $\tilde{J}(\cdot,\cdot,\cdot)$ equals the value function of $\Xi$, then the terminal condition $\tilde{J}(x,T,y)=\bar{F}(x,y;\Xi)$ clearly holds. It can be verified by the definition of $J(\cdot,\cdot,\cdot;\Xi)$ that
\begin{equation*}
\mathbb{E}\Big[M_{T}\Big|\mathcal{F}_{r}\Big]=M_{r},
\end{equation*}
hence $\{M_{r}\}_{r\in[t,T]}$ is an $\{\mathcal{F}_{r}\}_{r\in[t,T]}$-martingale.
\end{proof}

Based on the above martingale characterization, we can apply the method in \cite{jia2022policy} to design some PE algorithms for our singular control problems; see Section \ref{algorithm}. 

\subsection{Region iteration and the policy improvement result}
In the last subsection, we develop a PE theory to learn the value function of any admissible singular control law. This subsection focuses on the discussion on the iteration of regions such that we can update the singular control laws to approximate the optimal singular control law.

We first have the policy improvement (PI) result.
\begin{theorem}
\label{PIT}
Under Assumptions \ref{as1}, let $\Xi$ be an admissible singular control law with its value function $J(x,t,y;\Xi)\in C^{2,1,1}\!(W^{\Xi}\!\cap\!\{(x,t,y)|t<T\})\!\bigcap\! C^{1,0,1}\!((W^{\Xi})^{c}\!\cap\!\{(x,t,y)|t<T\})\!\bigcap\! C\!(\mathcal{Q})$ with $J_{x}(x,t,y;\Xi)$ absolute continuous in $x$. Define $\tilde{\Xi}=(W^{\tilde{\Xi}},(W^{\tilde{\Xi}})^{c})$ with $W^{\tilde{\Xi}}:=\bigcup\limits_{r\in[0,T]}W^{\tilde{\Xi}}_{r}$, where $W^{\tilde{\Xi}}_{r}$ is the two dimensional slices at time $r$ given by
\begin{equation*}
\begin{aligned}
&W^{\tilde{\Xi}}_{t}:=int\Big(\big\{(x,t,y)\big|-\beta J(x,t,y;\Xi)+J_{t}(x,t,y;\Xi)+\mu(x,t,y) J_{x}(x,t,y;\Xi)\\
&\!+\!\frac{1}{2}\!\sigma^{2}\!(\!x\!,\!t\!,\!y\!)J_{xx}\!(x,t,y;\Xi))\!+\!H\!(x,t,y)\!\le\! 0,c\!(\!x\!,\!t\!,\!y\!)\!-\!J_{x}\!(x,t,y;\Xi)\!+\!J_{y}\!(x,t,y;\Xi)\!\ge\! 0\!\big\}\!\Big)\!,\ \forall t\!< \!T\! ,\!\\
&W^{\tilde{\Xi}}_{T}:=\big\{(x,T,y)\big|c(x,T,y)-J_{x}(x,T,y;\Xi)+J_{y}(x,T,y;\Xi)>0\big\},
\end{aligned}
\end{equation*}
where $int(A)$ stands for the interior of the set $A$. If $\tilde{\Xi}$ is admissible, then
$$J(x,t,y;\tilde{\Xi})\le J(x,t,y;\Xi).$$
Moreover, if the mapping $\mathcal{I}:\Xi\mapsto\tilde{\Xi}=\mathcal{I}(\Xi)$ has a fixed point $\hat{\Xi}$ that is admissible such that its value function is in $\mathcal{R}^{*}$, we have that $\hat{\Xi}$ is an optimal strategy and $J(x,t,y;\hat{\Xi})$ is the optimal value function.
\end{theorem}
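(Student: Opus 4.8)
The plan is to run the verification argument in reverse: apply the It\^o--Tanaka--Meyer formula to the \emph{current} value function $J:=J(\cdot,\cdot,\cdot;\Xi)$ but along the state--control pair $(\tilde X,\tilde\xi):=(X^{x,t,y,\tilde\Xi},\xi^{x,t,y,\tilde\Xi})$ generated by the updated law $\tilde\Xi$. Writing $\mathcal{L}J:=-\beta J+J_{t}+\mu J_{x}+\tfrac12\sigma^{2}J_{xx}+H$ and $\mathcal{M}J:=c-J_{x}+J_{y}$, Assumption~\ref{as2}(3) together with Theorems~\ref{verif-a} and~\ref{PET} gives $J\in C^{2,1,1}(W^{\Xi})\cap C^{2,1,1}((W^{\Xi})^{c})\cap C^{1,0,1}(\mathcal{Q})$ with $J_{x}$ absolutely continuous in $x$, and $\mathcal{L}J=0$ on $W^{\Xi}$, $\mathcal{M}J=0$ on $(W^{\Xi})^{c}$, $J(\cdot,T,\cdot)=\tilde F$. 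Since this is precisely the regularity exploited in the proof of Theorem~\ref{verif-a}, the same expansion of $r\mapsto e^{-\beta(r-t)}J(\tilde X_{r},r,\tilde\xi_{r})$ applies with $(X^{\Xi},\xi^{\Xi})$ replaced by $(\tilde X,\tilde\xi)$: the semimartingale local-time term drops out because $J_{x}$ is absolutely continuous in $x$, and the stochastic integral has mean zero by the polynomial growth of $J$ and $J_{x}$, the linear growth of $\sigma$, and Lemma~\ref{l1} applied to the admissible law $\tilde\Xi$. Subtracting the resulting variational identity for $J(x,t,y)$ from the defining expectation (\ref{object}) of $J(x,t,y;\tilde\Xi)$, rewriting the jumps of $\tilde\xi$ as $\int_{0}^{\Delta\tilde\xi_{r}}(\cdot)\,du$ and using $c(x-a,t,y+a)=c(x,t,y)$, I expect to arrive at
\begin{equation*}
J(x,t,y;\tilde\Xi)-J(x,t,y;\Xi)=(\mathrm{I})+(\mathrm{II})+(\mathrm{III})+(\mathrm{IV}),
\end{equation*}
with $(\mathrm{I})=\mathbb{E}_{x,t,y}\int_{t}^{T}e^{-\beta(r-t)}\mathcal{L}J(\tilde X_{r},r,\tilde\xi_{r})\,dr$, $(\mathrm{III})=\mathbb{E}_{x,t,y}\int_{t}^{T}e^{-\beta(r-t)}\mathcal{M}J(\tilde X_{r},r,\tilde\xi_{r})\,d(\tilde\xi)^{c}_{r}$, $(\mathrm{IV})=\mathbb{E}_{x,t,y}\sum_{r\in[t,T)}e^{-\beta(r-t)}\int_{0}^{\Delta\tilde\xi_{r}}\mathcal{M}J(\tilde X_{r-}-u,r,\tilde\xi_{r-}+u)\,du$, and $(\mathrm{II})=\mathbb{E}_{x,t,y}\big[e^{-\beta(T-t)}\big(F(\tilde X_{T},\tilde\xi_{T})+c(\tilde X_{T-},T,\tilde\xi_{T-})\Delta\tilde\xi_{T}-J(\tilde X_{T-},T,\tilde\xi_{T-})\big)\big]$.

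The heart of the argument is showing each of the four terms is $\le 0$. Since $\tilde X$ stays in $\overline{W^{\tilde\Xi}}$ and, as a one-dimensional reflected diffusion, spends zero Lebesgue time on $\partial W^{\tilde\Xi}$ (and on $\partial W^{\Xi}$, where $J_{xx}$ may fail to exist), the integrand of $(\mathrm{I})$ equals $\mathcal{L}J\le 0$ for a.e.\ $r$ by the very definition of $W^{\tilde\Xi}_{r}$; hence $(\mathrm{I})\le 0$. For $(\mathrm{III})$ and $(\mathrm{IV})$, the reflection measure $d\tilde\xi$ is carried by $(W^{\tilde\Xi})^{c}$ and, by minimality of the Skorohod push, the jump path $u\mapsto(\tilde X_{r-}-u,r,\tilde\xi_{r-}+u)$ lies in $(\overline{W^{\tilde\Xi}})^{c}\subseteq(W^{\tilde\Xi})^{c}$ for $u<\Delta\tilde\xi_{r}$, so it suffices to prove $\mathcal{M}J\le 0$ on $(W^{\tilde\Xi})^{c}$. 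This is the delicate point. From $W^{\tilde\Xi}_{r}=\mathrm{int}\{\mathcal{L}J\le 0\}\cap\mathrm{int}\{\mathcal{M}J\ge 0\}$ one gets $(W^{\tilde\Xi}_{r})^{c}=\overline{\{\mathcal{L}J>0\}}\cup\overline{\{\mathcal{M}J<0\}}$; on $\overline{\{\mathcal{M}J<0\}}$ we have $\mathcal{M}J\le 0$ by continuity of $J_{x},J_{y}$, while on $\overline{\{\mathcal{L}J>0\}}$ one invokes that $J$ is the value function of $\Xi$: $\mathcal{L}J=0$ on $W^{\Xi}$ forces $\{\mathcal{L}J>0\}\subseteq(W^{\Xi})^{c}$, and $\mathcal{M}J=0$ on $(W^{\Xi})^{c}$ then gives $\{\mathcal{L}J>0\}\subseteq\{\mathcal{M}J=0\}$, so $\mathcal{M}J=0$ on its closure. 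Thus $\mathcal{M}J\le 0$ on $(W^{\tilde\Xi})^{c}$, whence $(\mathrm{III}),(\mathrm{IV})\le 0$. Finally, because $W^{\tilde\Xi}_{T}=\{c-J_{x}+J_{y}>0\}$ with $J(\cdot,T,\cdot)=\tilde F$ and $a\mapsto F(x-a,y+a)+c(x,T,y)a$ is convex with a finite minimizer, the terminal push $\Delta\tilde\xi_{T}$ is exactly the optimal terminal jump, so $F(\tilde X_{T},\tilde\xi_{T})+c(\tilde X_{T-},T,\tilde\xi_{T-})\Delta\tilde\xi_{T}=\tilde F(\tilde X_{T-},\tilde\xi_{T-})=J(\tilde X_{T-},T,\tilde\xi_{T-})$ and $(\mathrm{II})=0$, exactly as in the terminal-time bookkeeping in the proof of Theorem~\ref{verif}. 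Summing gives $J(x,t,y;\tilde\Xi)\le J(x,t,y;\Xi)$.

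For the last assertion, let $\hat\Xi\in\mathcal{X}$ be a fixed point of $\mathcal{I}$ with $V:=J(\cdot,\cdot,\cdot;\hat\Xi)\in C^{2,1,1}(\mathcal{Q}\cap\{t<T\})\cap C(\mathcal{Q})$. The fixed-point identity reads $W^{\hat\Xi}_{t}=\mathrm{int}\{\mathcal{L}V\le 0,\ \mathcal{M}V\ge 0\}$ for $t<T$, while Theorems~\ref{verif-a} and~\ref{PET} give $\mathcal{L}V=0$ on $W^{\hat\Xi}$, $\mathcal{M}V=0$ on $(W^{\hat\Xi})^{c}$ and $V(\cdot,T,\cdot)=\tilde F$. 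I would then check that $V$ solves the HJB variational inequality (\ref{hjb1})--(\ref{hjb2}): on $W^{\hat\Xi}$, $\mathcal{L}V=0$ and $\mathcal{M}V\ge 0$ (as $W^{\hat\Xi}\subseteq\{\mathcal{M}V\ge 0\}$), so the minimum is $0$; on $(W^{\hat\Xi})^{c}$, $\mathcal{M}V=0$, and if $\mathcal{L}V(p)<0$ at an interior point $p$ then by continuity there is a neighborhood $U\ni p$ with $\mathcal{L}V<0$ on which also $\mathcal{M}V\ge 0$ (a point of $U$ with $\mathcal{M}V<0$ would lie in $W^{\hat\Xi}$, contradicting $\mathcal{M}V\ge 0$ there), so $U\subseteq\mathrm{int}\{\mathcal{L}V\le 0,\mathcal{M}V\ge 0\}=W^{\hat\Xi}$, contradicting $p\in(W^{\hat\Xi})^{c}$; hence $\mathcal{L}V\ge 0$ everywhere and the minimum is again $0$. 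By Condition~(1) of Assumption~\ref{as2}, $V$ is the optimal value function, and by Condition~(2) the sets $W^{\hat\Xi}$ and $\{c-V_{x}+V_{y}>0\}$ differ only by a set with empty interior, hence have the same closure; therefore the control generated by $\hat\Xi$ coincides with the optimal one produced by Theorem~\ref{verif}, and $J(x,t,y;\hat\Xi)=V(x,t,y)$ is the optimal value.

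The step I expect to be the main obstacle is the sign of $(\mathrm{III})$ and $(\mathrm{IV})$: the definition of $W^{\tilde\Xi}$ by itself only controls $\mathcal{L}J$ and $\mathcal{M}J$ \emph{inside} the updated region, and on the push set $(W^{\tilde\Xi})^{c}$ one can a priori have $\mathcal{L}J>0$ and $\mathcal{M}J>0$ simultaneously, which would reverse the inequality; excluding this configuration genuinely requires the PDE characterization of $J(\cdot;\Xi)$ (that $\mathcal{M}J$ vanishes wherever $\mathcal{L}J>0$). A secondary technicality is justifying the It\^o--Tanaka--Meyer expansion and the vanishing of the stochastic integral for the $\tilde\Xi$-driven dynamics under merely ``$\tilde\Xi$ admissible'', given that the stated regularity of $J$ is attached to the \emph{old} partition $W^{\Xi}$ rather than $W^{\tilde\Xi}$; this is handled exactly as in Theorem~\ref{verif-a} by observing that the only non-smoothness of $J$ sits on $\partial W^{\Xi}$, which $\tilde X$ merely crosses.
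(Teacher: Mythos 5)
Your proposal is correct and follows essentially the same route as the paper: apply the It\^{o}--Tanaka--Meyer formula to $J(\cdot,\cdot,\cdot;\Xi)$ along the $\tilde\Xi$-generated pair, get the drift sign from the definition of $W^{\tilde\Xi}$, get the sign of the $d\tilde\xi$ terms from the PDE characterization of $J(\cdot;\Xi)$ (your observation $\{\mathcal{L}J>0\}\subseteq (W^{\Xi})^{c}\subseteq\{\mathcal{M}J=0\}$ is exactly the paper's step that $\{\mathcal{L}J>0,\ \mathcal{M}J>0\}=\emptyset$ and $(W^{\tilde\Xi})^{c}\subseteq\{\mathcal{M}J\le 0\}$), and then verify the HJB variational inequality at a fixed point and invoke Theorem \ref{verif} with Assumption \ref{as2}. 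The only divergences are cosmetic: you argue the terminal term vanishes exactly (via the optimal terminal jump), where the paper settles for the one-sided inequality $F(X^{\tilde\Xi}_{T-},\xi^{\tilde\Xi}_{T-})\ge F(X^{\tilde\Xi}_{T},\xi^{\tilde\Xi}_{T})+c\,\Delta\xi^{\tilde\Xi}_{T}$, and the paper carries out the integrability step explicitly through the stopping times $\tau_n$ and dominated convergence, which your sketch compresses but with the correct ingredients (Lemma \ref{l1} and the growth conditions).
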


\begin{proof}
Let $(X^{\Xi},\xi^{\Xi})$ and  $(X^{\tilde{\Xi}},\xi^{\tilde{\Xi}})$ be the solutions of state-control pairs to the Skorohod reflection problem w.r.t strategy $\Xi$ and $\tilde{\Xi}$, respectively. 

Applying the It\^{o}-Tanaka-Meyer's formula under the regularity assumption of $J(\cdot,\cdot,\cdot;\Xi)$, we have
\begin{align*}
&e^{-\beta (T-t)}J(X^{\tilde{\Xi}}_{T-},T,\xi^{\tilde{\Xi}}_{T-};\Xi)-J(x,t,y;\Xi)+\int_{t}^{T}e^{-\beta(r-t)}H(X^{\tilde{\Xi}}_{r},r,\xi^{\tilde{\Xi}}_{r})dr+\int_{t}^{T-}e^{-\beta(r-t)}c(X^{\tilde{\Xi}}_{r-},r,\xi^{\tilde{\Xi}}_{r-})d\xi^{\tilde{\Xi}}_{r}\\
=&\int_{t}^{T}e^{-\beta(r-t)}\Big[H(X^{\tilde{\Xi}}_{r},r,\xi^{\tilde{\Xi}}_{r})-\beta J(X^{\tilde{\Xi}}_{r},r,\xi^{\tilde{\Xi}}_{r};\Xi)+J_{t}(X^{\tilde{\Xi}}_{r},r,\xi^{\tilde{\Xi}}_{r};\Xi)+\mu(X^{\tilde{\Xi}}_{r},r,\xi^{\tilde{\Xi}}_{r}) J_{x}(X^{\tilde{\Xi}}_{r},r,\xi^{\tilde{\Xi}}_{r};\Xi)\\&+\frac{1}{2}\sigma^{2}(X^{\tilde{\Xi}}_{r},r,\xi^{\tilde{\Xi}}_{r})J_{xx}(X^{\tilde{\Xi}}_{r},r,\xi^{\tilde{\Xi}}_{r};\Xi)\Big]dr+\int_{t}^{T}e^{-\beta(r-t)}\Big[\sigma(X^{\tilde{\Xi}}_{r},r,\xi^{\tilde{\Xi}}_{r})J_{x}(X^{\tilde{\Xi}}_{r},r,\xi^{\tilde{\Xi}}_{r};\Xi)\Big]dB_{r}\\
&+\int_{t}^{T}e^{-\beta(r-t)}\Big[c(X^{\tilde{\Xi}}_{r},r,\xi^{\tilde{\Xi}}_{r})-J_{x}(X^{\tilde{\Xi}}_{r},r,\xi^{\tilde{\Xi}}_{r};\Xi)+J_{y}(X^{\tilde{\Xi}}_{r},r,\xi^{\tilde{\Xi}}_{r};\Xi)\Big]d(\xi^{\tilde{\Xi}})^{c}_{r}\\
&+\sum\limits_{r\in[t,T)}e^{-\beta(r-t)}\int_{0}^{\Delta\xi^{\tilde{\Xi}}_{r}}\Big[c(X^{\tilde{\Xi}}_{r-}\!-\!u,r,\xi^{\tilde{\Xi}}_{r-}\!+\!u)-J_{x}(X^{\tilde{\Xi}}_{r-}\!-\!u,r,\xi^{\tilde{\Xi}}_{r-}\!+\!u;\Xi)+J_{y}(X^{\tilde{\Xi}}_{r-}\!-\!u,r,\xi^{\tilde{\Xi}}_{r-}\!+\!u;\Xi)\Big]du.
\end{align*}

In the above equality, plugging in $J(X^{\tilde{\Xi}}_{T-},T,\xi^{\tilde{\Xi}}_{T-};\Xi)=F(X^{\tilde{\Xi}}_{T-},\xi^{\tilde{\Xi}}_{T-})$ and replacing $T$ by $T\wedge\tau_{n}$ with $\tau_{n}=\inf\big\{r\ge t\big||X^{\tilde{\Xi}}_{r}+\xi^{\tilde{\Xi}}_{r}|\ge n\big\}$, we deduce that $\sigma(X^{\tilde{\Xi}}_{r},r,\xi^{\tilde{\Xi}}_{r})$ and $J_{x}(X^{\tilde{\Xi}}_{r},r,\xi^{\tilde{\Xi}}_{r};\Xi),r\in[t,T\wedge\tau_{n}]$, are bounded under Assumptions \ref{as1} and that $J_{x}(x,t,y;\Xi)$ is absolutely continuous in $x$. Then taking expectations on both sides of the above equality yields
\begin{equation}
\label{PF}
\begin{aligned}
&\mathbb{E}_{x,t,y}\bigg\{e^{-\beta (T\wedge\tau_{n}-t)}F(X^{\tilde{\Xi}}_{T\wedge\tau_{n}-},\xi^{\tilde{\Xi}}_{T\wedge\tau_{n}-})-J(x,t,y;\Xi)+
\int_{t}^{T\wedge\tau_{n}}e^{-\beta(r-t)}H(X^{\tilde{\Xi}}_{r},r,\xi^{\tilde{\Xi}}_{r})dr\\&+\int_{t}^{T\wedge\tau_{n}-}e^{-\beta(r-t)}c(X^{\tilde{\Xi}}_{r-},r,\xi^{\tilde{\Xi}}_{r-})d\xi^{\tilde{\Xi}}_{r}\bigg\}\\
=&\mathbb{E}_{x,t,y}\bigg\{\int_{t}^{T\wedge\tau_{n}}e^{-\beta(r-t)}\Big[H(X^{\tilde{\Xi}}_{r},r,\xi^{\tilde{\Xi}}_{r})-\beta J(X^{\tilde{\Xi}}_{r},r,\xi^{\tilde{\Xi}}_{r};\Xi)+J_{t}(X^{\tilde{\Xi}}_{r},r,\xi^{\tilde{\Xi}}_{r};\Xi)\\
&+\mu(X^{\tilde{\Xi}}_{r},r,\xi^{\tilde{\Xi}}_{r}) J_{x}(X^{\tilde{\Xi}}_{r},r,\xi^{\tilde{\Xi}}_{r};\Xi)+\frac{1}{2}\sigma^{2}(X^{\tilde{\Xi}}_{r},r,\xi^{\tilde{\Xi}}_{r})J_{xx}(X^{\tilde{\Xi}}_{r},r,\xi^{\tilde{\Xi}}_{r};\Xi)\Big]dr\\
&+\int_{t}^{T\wedge\tau_{n}}e^{-\beta(r-t)}\Big[c(X^{\tilde{\Xi}}_{r},r,\xi^{\tilde{\Xi}}_{r})-J_{x}(X^{\tilde{\Xi}}_{r},r,\xi^{\tilde{\Xi}}_{r};\Xi)+J_{y}(X^{\tilde{\Xi}}_{r},r,\xi^{\tilde{\Xi}}_{r};\Xi)\Big]d(\xi^{\tilde{\Xi}})^{c}_{r}\\
&+\!\!\!\!\sum\limits_{r\in[t,T\wedge\tau_{n})}\!\!\!\!e^{-\beta(r-t)}\!\int_{0}^{\Delta\xi^{\tilde{\Xi}}_{r}}\!\!\Big[c(X^{\tilde{\Xi}}_{r-}\!-\!u,r,\xi^{\tilde{\Xi}}_{r-}\!+\!u)-J_{x}(X^{\tilde{\Xi}}_{r-}\!-\!u,r,\xi^{\tilde{\Xi}}_{r-}\!+\!u;\Xi)+J_{y}(X^{\tilde{\Xi}}_{r-}\!-\!u,r,\xi^{\tilde{\Xi}}_{r-}\!+\!u;\Xi)\Big]du.\bigg\}
\end{aligned}
\end{equation}
Based on the definition of $\tilde{\Xi}$, the first term on the right-hand-side of Eq.(\ref{PF}) is non-positive because \\ $(X^{\tilde{\Xi}}_{r},r,\xi^{\tilde{\Xi}}_{r})\in\overline{ W^{\tilde{\Xi}}}$ for any $r\in[0,T]$. Under the regularity assumption, one can show that the value function $J(\cdot,\cdot,\cdot;\Xi)$ satisfies the HJB variational equality in Theorem \ref{verif-a}, which yields
\begin{equation*}
\begin{aligned}
\!\Big\{\!(x,t,y)\!\Big|&\!-\!\beta J\!(x,t,y;\Xi)\!+\!J_{t}\!(x,t,y;\Xi)\!+\!\mu\!(x,t,y) J_{x}\!(x,t,y;\Xi)\!+\!\frac{1}{2}\sigma^{2}\!(x,t,y)J_{xx}\!(x,t,y;\Xi)\!\\
&+H(x,t,y)>0,c(x,t,y)-J_{x}(x,t,y;\Xi)+J_{y}(x,t,y;\Xi)> 0\Big\}=\emptyset.
\end{aligned}
\end{equation*}
and  $(W^{\tilde{\Xi}})^{c}\subset\{(x,t,y)|c(x,t,y)-J_{x}(x,t,y;\Xi)+J_{y}(x,t,y;\Xi)\le 0\}$.  As such, the integrands in the second and third terms of  Eq.(\ref{PF}) are zero for $(X^{\tilde{\Xi}}_{r-},r,\xi^{\tilde{\Xi}}_{r-})\in \overline{W^{\tilde{\Xi}}}$, and are non-positive for $(X^{\tilde{\Xi}}_{r-},r,\xi^{\tilde{\Xi}}_{r-})\in (W^{\tilde{\Xi}})^{c}$. Thus,
\begin{equation}
\label{eq1}
\begin{aligned}
&J(x,t,y;\Xi)
\ge \mathbb{E}_{x,t,y}\Bigg\{e^{-\beta(T\wedge\tau_{n}-t)}F(X^{\tilde{\Xi}}_{T\wedge\tau_{n}-},\xi^{\tilde{\Xi}}_{T\wedge\tau_{n}-})\\ &+\int_{t}^{T\wedge\tau_{n}}e^{-\beta (r-t)}H(X^{\tilde{\Xi}}_{r},r,\xi^{\tilde{\Xi}}_{r})dr+\int_{t}^{T\wedge\tau_{n}-}e^{-\beta (r-t)}c(X^{\tilde{\Xi}}_{r-},r,\xi^{\tilde{\Xi}}_{r-})d\xi^{\tilde{\Xi}}_{r}\Bigg\}.
\end{aligned}
\end{equation}
Noting that 
\begin{align*}
\mathbb{E}_{x,t,y}
&e^{-\beta(T\wedge\tau_{n}-t)}F(X^{\tilde{\Xi}}_{T\wedge\tau_{n}},\xi^{\tilde{\Xi}}_{T\wedge\tau_{n}})\le \mathbb{E}_{x,t,y}\bigg\{F(X^{\tilde{\Xi}}_{T},\xi^{\tilde{\Xi}}_{T})1_{\tau_{n}\ge T}+F(X^{\tilde{\Xi}}_{\tau_{n}},\xi^{\tilde{\Xi}}_{\tau_{n}})1_{\tau_{n}\le T}\bigg\}\\
&\le \mathbb{E}_{x,t,y}\bigg\{C(1+|X^{\tilde{\Xi}}_{T}+\xi^{\tilde{\Xi}}_{T}|^{k})\bigg\}+\mathbb{E}_{x,t,y}\bigg\{C(1+n^{k})1_{\tau_{n}\le T})\bigg\}\\
&\le C(1+|x+y|^{k})+C(1+n^k)\mathbb{E}_{x,t,y}\bigg\{\frac{\max\limits_{r\in[t,T]}|X^{\tilde{\Xi}}_{r}+\xi^{\tilde{\Xi}}_{r}|^{k+1}}{n^{k+1}}\bigg\}\\
&\le C(1+|x+y|^{k})+C\frac{(1+n^k)(1+|x+y|^{k+1})}{n^{k+1}}
\end{align*}
is bounded by a constant independent of $n$. Then, letting $n\rightarrow\infty$ in (\ref{eq1}), applying the Dominated Convergence Theorem for the first term and Monotone Convergence Theorem for the remaining two terms, we obtain
\begin{equation*}
\begin{aligned}
J(x,t,y;\Xi)\ge&\mathbb{E}_{x,t,y}\Bigg\{e^{-\beta(T-t)}F(X^{\tilde{\Xi}}_{T-},\xi^{\tilde{\Xi}}_{T-})+\int_{t}^{T}e^{-\beta (r-t)}H(X^{\tilde{\Xi}}_{r},r,\xi^{\tilde{\Xi}}_{r})dr\\&+\int_{t}^{T-}e^{-\beta (r-t)}c(X^{\tilde{\Xi}}_{r-},r,\xi^{\tilde{\Xi}}_{r-})d\xi^{\tilde{\Xi}}_{r}\Bigg\}.
\end{aligned}
\end{equation*}
In view of the expression of $W^{\tilde{\Xi}}_{T}$, we have 
\begin{equation*}
F(X^{\tilde{\Xi}}_{T-},\xi^{\tilde{\Xi}}_{T-})\ge F(X^{\tilde{\Xi}}_{T},\xi^{\tilde{\Xi}}_{T})+c(X^{\tilde{\Xi}}_{T-},T,\xi^{\tilde{\Xi}}_{T-})\Delta\xi^{\tilde{\Xi}}_{T}.
\end{equation*}
Combining the previous two results leads to the desired inequality
\begin{equation*}
J(x,t,y;\tilde{\Xi})\leq J(x,t,y;\Xi). 
\end{equation*}
Suppose that $\hat{\Xi}$ is a fixed point of the map $\mathcal{I}$. Then 
\begin{equation*}
\begin{aligned}
&\!W^{\hat{\Xi}}_{t}= int\bigg(\Big\{(x,t,y)|\!-\!\beta J(x,t,y;\hat{\Xi})\!+\!J_{t}(x,t,y;\hat{\Xi})\!+\!\mu(x,t,y) J_{x}(x,t,y;\hat{\Xi})\\&\!+\!\frac{1}{2}\sigma^{2}(x,t,y)J_{xx}(x,t,y;\hat{\Xi})\!+\!H(x,t,y)\!\le\! 0,c(x,t,y)\!-\!J_{x}(x,t,y;\hat{\Xi})\!+\!J_{y}(x,t,y;\hat{\Xi})\!\ge\! 0\!\Big\}\!\bigg)\!,\!\\
&\!W^{\hat{\Xi}}_{T}=\bigg\{(x,t,y)|c(x,t,y)-J_{x}(x,t,y;\hat{\Xi})+J_{y}(x,t,y;\hat{\Xi})>0\bigg\}.
\end{aligned}
\end{equation*}
Then, as $J$ satisfies the variational equalities in Theorem \ref{verif-a}, we deduce that $J(x,t,y;\hat{\Xi})$ satisfies (\ref{hjb1}). In addition, the terminal condition (\ref{hjb2}) follows from the expression of $W^{\hat{\Xi}}_{T}$. Thanks to Theorem \ref{verif},  $\hat{\Xi}$ is indeed an optimal strategy.
\end{proof}

\begin{remark}
We note that the above theorem does not guarantee the existence of a fixed point of $\mathcal{I}$. In numerical implementations, the existence of the fixed point can be illustrated by iterating the mapping consecutively until convergence. From the theoretical perspective, the existence relates to the convergence of policy iteration, which is a topical and challenging problem in continuous-time RL; see \cite{huang2025convergence}, \cite{tran2025policy}, \cite{ma2025convergence} for some recent progress. In our concrete example in Subsection \ref{4case}, this convergence of policy iteration can be rigorously verified. 
\end{remark}

\subsection{q-functions and martingale characterization}
\label{fin-q}
Note that the previous iteration operator depends on unknown model parameters  $\mu$ and $\sigma$. 
In the practical application of the previous policy iteration, one needs to estimate the values of  $\mu$ and $\sigma$ in each iteration. In this subsection, we are interested in devising some model-free RL algorithms without the inefficient and inaccurate estimations of individual model parameters. In particular, we consider the following pair of q-functions that will be used to update the region of the singular control law. 
For a given admissible singular control law $\Xi$,  define
\begin{equation*}
\begin{aligned}
q_{0}(x,t,y;\Xi):=&c(x,t,y)-J_{x}(x,t,y;\Xi)+J_{y}(x,t,y;\Xi),\\
q_{1}(x,t,y;\Xi):=&-\beta J(x,t,y;\Xi)+J_{t}(x,t,y;\Xi)+\mu(x,t,y) J_{x}(x,t,y;\Xi)\\
&+\frac{1}{2}\sigma^{2}(x,t,y)J_{xx}(x,t,y;\Xi)+H(x,t,y).
\end{aligned}
\end{equation*}
Then we need to learn $q_{0}$ and $q_{1}$ to apply the PI theorem. The above functions can be interpreted as the zero-order and first-order q-functions, with the first-order q-function equaling the q-function for regular control; see \cite{jia2023q}. The zero-order q-function appears due to the variational inequality stemming from the singular control. 

We have the following martingale characterization results for singular control problems using the It\^{o}-Tanaka-Meyer's formula.

\begin{theorem}[Martingale characterization of the $(q_{0},q_{1})$ pair]
\label{qforq}
Given an admissible singular control law $\Xi$ with its value function $J(\cdot,\cdot,\cdot;\Xi)\in\mathcal{R}^{\Xi}$ and continuous functions $\tilde{q}_{0},\tilde{q}_{1}:\mathcal{Q}\rightarrow \mathbb{R}$, then\\
(1) 
\begin{equation*}
\begin{aligned}
&\tilde{q}_{0}(x,t,y)=q_{0}(x,t,y;\Xi),\quad \forall(x,t,y)\in (W^{\Xi})^{c},\\
&\tilde{q}_{1}(x,t,y)=q_{1}(x,t,y;\Xi),\quad \forall(x,t,y)\in W^{\Xi}
\end{aligned}
\end{equation*}
if and only if the process
\begin{equation*}
\begin{aligned}
&\bigg\{e^{-\beta(r-t)}J(X^{\Xi}_{r},r,\xi^{\Xi}_{r};\Xi)+\int_{t}^{r}e^{-\beta(r'-t)}\Big[H(X^{\Xi}_{r'},r',\xi^{\Xi}_{r'})-\tilde{q}_{1}(X^{\Xi}_{r'},r',\xi^{\Xi}_{r'})\Big]dr'\\
&+\int_{t}^{r}e^{-\beta(r'-t)}\Big[c(X^{\Xi}_{r'},r',\xi^{\Xi}_{r'})-\tilde{q}_{0}(X^{\Xi}_{r'},r',\xi^{\Xi}_{r'})\Big]d(\xi^{\Xi})^{c}_{r'}\\&+\sum\limits_{r'\in[t,r]}e^{-\beta(r'-t)}\int_{0}^{\Delta\xi^{\Xi}_{r'}}\Big[c(X^{\Xi}_{r'-}-u,r',\xi^{\Xi}_{r'-}+u)-\tilde{q}_{0}(X^{\Xi}_{r'-}-u,r',\xi^{\Xi}_{r'-}+u)\Big]du,\  r\in[t,T]\bigg\}
\end{aligned}
\end{equation*}
is an $\{\mathcal{F}_{r}\}_{r\in[t,T]}$-martingale.\\
(2) The following three statements are equivalent:\\
(i)
\begin{equation*}
\begin{aligned}
&\tilde{q}_{0}(x,t,y)=q_{0}(x,t,y;\Xi),\quad \forall(x,t,y)\in \mathcal{Q},\\
&\tilde{q}_{1}(x,t,y)=q_{1}(x,t,y;\Xi),\quad \forall(x,t,y)\in \mathcal{Q}
\end{aligned}
\end{equation*}
(ii) For any admissible singular control $\xi'$, the process
\begin{equation*}
\begin{aligned}
&\bigg\{e^{-\beta(r-t)}J(X^{\xi'}_{r},r,\xi'_{r};\Xi)+\int_{t}^{r}e^{-\beta(r'-t)}\Big[H(X^{\xi'}_{r'},r',\xi'_{r'})-\tilde{q}_{1}(X^{\xi'}_{r'},r',\xi'_{r'})\Big]dr'\\
&+\int_{t}^{r}e^{-\beta(r'-t)}\Big[c(X^{\xi'}_{r'},r',\xi'_{r'})-\tilde{q}_{0}(X^{\xi'}_{r'},r',\xi'_{r'})\Big]d(\xi')^{c}_{r'}\\&+\sum\limits_{r'\in[t,r]}e^{-\beta(r'-t)}\int_{0}^{\Delta\xi'_{r'}}\Big[c(X^{\xi'}_{r'-}-u,r',\xi'_{r'-}+u)-\tilde{q}_{0}(X^{\xi'}_{r'-}-u,r',\xi'_{r'-}+u)\Big]du,\  r\in[t,T]\bigg\}
\end{aligned}
\end{equation*}
is an $\{\mathcal{F}_{r}\}_{r\in[t,T]}$-martingale.\\
(iii) For any admissible singular control law $\Xi'$, the process
\begin{equation}
\label{m1_b}
\begin{aligned}
&\bigg\{e^{-\beta(r-t)}J(X^{\Xi'}_{r},r,\xi^{\Xi'}_{r};\Xi)+\int_{t}^{r}e^{-\beta(r'-t)}\Big[H(X^{\Xi'}_{r'},r',\xi^{\Xi'}_{r'})-\tilde{q}_{1}(X^{\Xi'}_{r'},r',\xi^{\Xi'}_{r'})\Big]dr'\\
&+\int_{t}^{r}e^{-\beta(r'-t)}\Big[c(X^{\Xi'}_{r'},r',\xi^{\Xi'}_{r'})-\tilde{q}_{0}(X^{\Xi'}_{r'},r',\xi^{\Xi'}_{r'})\Big]d(\xi^{\Xi'})^{c}_{r'}\\&+\sum\limits_{r'\in[t,r]}e^{-\beta(r'-t)}\int_{0}^{\Delta\xi^{\Xi'}_{r'}}\Big[c(X^{\Xi'}_{r'-}-u,r',\xi^{\Xi'}_{r'-}+u)-\tilde{q}_{0}(X^{\Xi'}_{r'-}-u,r',\xi^{\Xi'}_{r'-}+u)\Big]du,\  r\in[t,T]\bigg\}
\end{aligned}
\end{equation}
is an $\{\mathcal{F}_{r}\}_{r\in[t,T]}$-martingale.
\end{theorem}
\begin{proof}
First, we claim that for any admissible $\Xi'$, the following two statements are equivalent:\\
(a). It holds that
\begin{align}
&\tilde{q}_{0}(x,t,y)=q_{0}(x,t,y;\Xi),\quad \forall(x,t,y)\in (W^{\Xi'})^{c},\label{a1}\\
&\tilde{q}_{1}(x,t,y)=q_{1}(x,t,y;\Xi),\quad \forall(x,t,y)\in W^{\Xi'}.\label{a2}
\end{align}
(b). The process (\ref{m1_b}) is an $\{\mathcal{F}_{r}\}_{r\in[t,T]}$-martingale.

\textit{Proof of claim:} Applying It\^{o}-Tanaka-Meyer's formula to $\big\{e^{-\beta(r'-t)}J(X^{\Xi'}_{r'},r',\xi^{\Xi'}_{r'};\Xi)\big\}_{r'\in[t,r]}$ yields
\begin{equation*}
\begin{aligned}
e^{-\beta(r-t)}&J(X^{\Xi'}_{r},r,\xi^{\Xi'}_{r};\Xi)-J(x,t,y;\Xi)+\int_{t}^{r}e^{-\beta(r'-t)}\Big[H(X^{\Xi'}_{r'},r',\xi^{\Xi'}_{r'})-\tilde{q}_{1}(X^{\Xi'}_{r'},r',\xi^{\Xi'}_{r'})\Big]dr'\\&+\int_{t}^{r}e^{-\beta(r'-t)}\Big[c(X^{\Xi'}_{r'},r',\xi^{\Xi'}_{r'})-\tilde{q}_{0}(X^{\Xi'}_{r'},r',\xi^{\Xi'}_{r'})\Big]d(\xi^{\Xi'})^{c}_{r'}\\&+\sum\limits_{r'\in[t,r]}e^{-\beta(r'-t)}\int_{0}^{\Delta\xi^{\Xi'}_{r'}}\Big[c(X^{\Xi'}_{r'-}-u,r',\xi^{\Xi'}_{r'-}+u)-\tilde{q}_{0}(X^{\Xi'}_{r'-}-u,r',\xi^{\Xi'}_{r'-}+u)\Big]du\\
=&\int_{t}^{r}e^{-\beta(r'-t)}\Big[q_{1}(X^{\Xi'}_{r'},r',\xi^{\Xi'}_{r'};\Xi)-\tilde{q}_{1}(X^{\Xi'}_{r'},r',\xi^{\Xi'}_{r'})\Big]dr'\\
&+\int_{t}^{r}e^{-\beta(r'-t)}\Big[q_{0}(X^{\Xi'}_{r'},r',\xi^{\Xi'}_{r'};\Xi)-\tilde{q}_{0}(X^{\Xi'}_{r'},r',\xi^{\Xi'}_{r'})\Big]d(\xi^{\Xi'})^{c}_{r'}\\
&+\sum\limits_{r'\in[t,r]}e^{-\beta(r'-t)}\int_{0}^{\Delta\xi^{\Xi'}_{r'}}\Big[q_{0}(X^{\Xi'}_{r'-}-u,r',\xi^{\Xi'}_{r'-}+u;\Xi)-\tilde{q}_{0}(X^{\Xi'}_{r'-}-u,r',\xi^{\Xi'}_{r'-}+u)\Big]du\\
&+\int_{t}^{r}e^{-\beta(r'-t)}J_{x}(X^{\Xi'}_{r'},r',\xi^{\Xi'}_{r'};\Xi)\sigma(X^{\Xi'}_{r'},r',\xi^{\Xi'}_{r'})dB_{r'},
\end{aligned}
\end{equation*}
The direction (a)$\Rightarrow$(b) readily follows. For the direction (b)$\Rightarrow$(a), we have from (b) and the last formula that for any $(x,t,y)\in\mathcal{Q}$ and $r\ge t$,  we have, almost surely,
\begin{equation*}
\begin{aligned}
0=&\int_{t}^{r}e^{-\beta(r'-t)}\Big[q_{1}(X^{\Xi'}_{r'},r',\xi^{\Xi'}_{r'};\Xi)-\tilde{q}_{1}(X^{\Xi'}_{r'},r',\xi^{\Xi'}_{r'})\Big]dr'\\
&+\int_{t}^{r}e^{-\beta(r'-t)}\Big[q_{0}(X^{\Xi'}_{r'},r',\xi^{\Xi'}_{r'};\Xi)-\tilde{q}_{0}(X^{\Xi'}_{r'},r',\xi^{\Xi'}_{r'})\Big]d(\xi^{\Xi'})^{c}_{r'}\\
&+\sum\limits_{r'\in[t,r]}e^{-\beta(r'-t)}\int_{0}^{\Delta\xi^{\Xi'}_{r'}}\Big[q_{0}(X^{\Xi'}_{r'-}-u,r',\xi^{\Xi'}_{r'-}+u;\Xi)-\tilde{q}_{0}(X^{\Xi'}_{r'-}-u,r',\xi^{\Xi'}_{r'-}+u)\Big]du.
\end{aligned}
\end{equation*}
Setting $(x,t,y)\in (W^{\Xi'})^{c}$ and letting $r=t$ in the last formula, we deduce (\ref{a1}). Setting $(x,t,y)\in W^{\Xi'}$, then the last two terms on the right-hand side vanish when $r$ is sufficiently close to $t$, and we deduce (\ref{a2}), which finishes the proof of the claim.

The statement $(1)$ of the theorem follows directly by letting $\Xi'=\Xi$ in the assertion. 

For the statement $(2)$,  (i)$\Rightarrow$(ii) follows directly by applying the It\^{o}'s formula, and (ii)$\Rightarrow$(iii) is trivial as the generated singular control $\xi^{\Xi'}$ is admissible. For (iii)$\Rightarrow$(i), using the assertion, we obtain that for any admissible $\Xi'$, (\ref{a1})$\sim$(\ref{a2}) hold. By varing $\Xi'$, we obtain that
\begin{equation*}
\begin{aligned}
&\tilde{q}_{0}(x,t,y)=q_{0}(x,t,y;\Xi),\quad \forall(x,t,y)\in \mathcal{Q},\\
&\tilde{q}_{1}(x,t,y)=q_{1}(x,t,y;\Xi),\quad \forall(x,t,y)\in \mathcal{Q}.
\end{aligned}
\end{equation*}
\quad
\end{proof}

\begin{theorem}[Characterization of $J,q_{0},q_{1}$ for a given singular control law]
\label{qforjq}
Given an admissible singular control law $\Xi$, a function $\tilde{J}\in\mathcal{R}^{\Xi}$, and continuous functions $\tilde{q}_{0},\tilde{q}_{1}:\mathcal{Q}\rightarrow \mathbb{R}$ with the property
\begin{equation*}
\begin{aligned}
&\tilde{J}(x,T,y)=\bar{F}(x,y;\Xi),\\
&\tilde{q}_{0}(x,t,y)=0,\ \forall (x,t,y)\in (W^{\Xi})^{c},\\
&\tilde{q}_{1}(x,t,y)=0,\ \forall (x,t,y)\in W^{\Xi}.
\end{aligned}
\end{equation*}
Then, the following three statements are equivalent:\\
(i) $\tilde{J}$, $\tilde{q}_{0}$ and $\tilde{q}_{1}$ are the value function and q-functions of $\Xi$.\\
(ii) For any admissible singular control $\xi'$, the process
\begin{equation*}
\begin{aligned}
&\bigg\{e^{-\beta(r-t)}\tilde{J}(X^{\xi'}_{r},r,\xi'_{r})+\int_{t}^{r}e^{-\beta(r'-t)}\Big[H(X^{\xi'}_{r'},r',\xi'_{r'})-\tilde{q}_{1}(X^{\xi'}_{r'},r',\xi'_{r'})\Big]dr'\\
&+\int_{t}^{r}e^{-\beta(r'-t)}\Big[c(X^{\xi'}_{r'},r',\xi'_{r'})-\tilde{q}_{0}(X^{\xi'}_{r'},r',\xi'_{r'})\Big]d(\xi')^{c}_{r'}\\
&+\sum\limits_{r'\in[t,r]}e^{-\beta(r'-t)}\int_{0}^{\Delta\xi'_{r'}}\Big[c(X^{\xi'}_{r'-}-u,r',\xi'_{r'-}+u)-\tilde{q}_{0}(X^{\xi'}_{r'-}-u,r',\xi'_{r'-}+u)\Big]du,\  r\in[t,T]\bigg\} 
\end{aligned}
\end{equation*}
is an $\{\mathcal{F}_{r}\}_{r\in[t,T]}$-martingale.\\
(iii) For any admissible singular control law $\Xi'$, the process
\begin{equation*}
\begin{aligned}
&\bigg\{e^{-\beta(r-t)}\tilde{J}(X^{\Xi'}_{r},r,\xi^{\Xi'}_{r})+\int_{t}^{r}e^{-\beta(r'-t)}\Big[H(X^{\Xi'}_{r'},r',\xi^{\Xi'}_{r'})-\tilde{q}_{1}(X^{\Xi'}_{r'},r',\xi^{\Xi'}_{r'})\Big]dr'\\
&+\int_{t}^{r}e^{-\beta(r'-t)}\Big[c(X^{\Xi'}_{r'},r',\xi^{\Xi'}_{r'})-\tilde{q}_{0}(X^{\Xi'}_{r'},r',\xi^{\Xi'}_{r'})\Big]d(\xi^{\Xi'})^{c}_{r'}\\
&+\sum\limits_{r'\in[t,r]}e^{-\beta(r'-t)}\int_{0}^{\Delta\xi^{\Xi'}_{r'}}\Big[c(X^{\Xi'}_{r'-}-u,r',\xi^{\Xi'}_{r'-}+u)-\tilde{q}_{0}(X^{\Xi'}_{r'-}-u,r',\xi^{\Xi'}_{r'-}+u)\Big]du,\  r\in[t,T]\bigg\} 
\end{aligned}
\end{equation*}
is an $\{\mathcal{F}_{r}\}_{r\in[t,T]}$-martingale.
\end{theorem}
\begin{proof}
The direction (ii)$\Rightarrow$(iii) is trivial, we show (i)$\Rightarrow$(ii) and (iii)$\Rightarrow$(i).

For (i)$\Rightarrow$(ii), applying It\^{o}-Tanaka-Meyer's formula to $\big\{e^{-\beta(r'-t)}\tilde{J}(X^{\xi'}_{r'},r',\xi'_{r'})\big\}_{r'\in[t,r]}$ yields
\begin{align*}
&e^{-\beta(r-t)}\tilde{J}(X^{\xi'}_{r},r,\xi'_{r})-\tilde{J}(x,t,y)+\int_{t}^{r}e^{-\beta(r'-t)}\Big[H(X^{\xi'}_{r'},r',\xi'_{r'})-\tilde{q}_{1}(X^{\xi'}_{r'},r',\xi'_{r'})\Big]dr'\\&+\int_{t}^{r}e^{-\beta(r'-t)}\Big[c(X^{\xi'}_{r'},r',\xi'_{r'})-\tilde{q}_{0}(X^{\xi'}_{r'},r',\xi'_{r'})\Big]d(\xi')^{c}_{r'}\\
&+\sum\limits_{r'\in[t,r]}e^{-\beta(r'-t)}\int_{0}^{\Delta\xi'_{r'}}\Big[c(X^{\xi'}_{r'-}-u,r',\xi'_{r'-}+u)-\tilde{q}_{0}(X^{\xi'}_{r'-}-u,r',\xi'_{r'-}+u)\Big]du\\
=&\int_{t}^{r}e^{-\beta(r'-t)}\Big[H(X^{\xi'}_{r'},r',\xi'_{r'})-\beta\tilde{J}(X^{\xi'}_{r'},r',\xi'_{r'})+\tilde{J}_{t}(X^{\xi'}_{r'},r',\xi'_{r'})+\mu(X^{\xi'}_{r'},r',\xi'_{r'})\tilde{J}_{x}(X^{\xi'}_{r'},r',\xi'_{r'})\\
&+\frac{1}{2}\sigma^{2}(X^{\xi'}_{r'},r',\xi'_{r'})\tilde{J}_{xx}(X^{\xi'}_{r'},r',\xi'_{r'})-\tilde{q}_{1}(X^{\xi'}_{r'},r',\xi'_{r'})\Big]dr'\\
&+\int_{t}^{r}e^{-\beta(r'-t)}\Big[c(X^{\xi'}_{r'},r',\xi'_{r'})-\tilde{J}_{x}(X^{\xi'}_{r'},r',\xi'_{r'})+\tilde{J}_{y}(X^{\xi'}_{r'},r',\xi'_{r'})-\tilde{q}_{0}(X^{\xi'}_{r'},r',\xi'_{r'})\Big]d(\xi')^{c}_{r'}\\
&+\!\!\!\sum\limits_{r'\in[t,r]}\!e^{-\beta(r'-t)}\!\int_{0}^{\Delta\xi'_{r'}}\!\!\!\Big[c(X^{\xi'}_{r'-\!}\!-\!u,r',\xi'_{r'-\!}\!+\!u)\!-\!\tilde{J}_{x}(X^{\xi'}_{r'-\!}\!-\!u,r',\xi'_{r'-\!}\!+\!u)\!+\!\tilde{J}_{y}(X^{\xi'}_{r'-\!}\!-\!u,r',\xi'_{r'-\!}\!+\!u)\\&\!-\!\tilde{q}_{0}(X^{\xi'}_{r'-\!}\!-\!u,r',\xi'_{r'-\!}\!+\!u)\Big]du+\int_{t}^{r}e^{-\beta(r'-t)}\tilde{J}_{x}(X^{\xi'}_{r'},r',\xi'_{r'})\sigma(X^{\xi'}_{r'},r',\xi'_{r'})dB_{r'}.
\end{align*}
Then (i)$\Rightarrow$(ii) follows. For (iii)$\Rightarrow$(i), similar to the proof of (b)$\Rightarrow$(a) in proof of Theorem \ref{qforq}, we obtain that
\begin{equation*}
\begin{aligned}
\tilde{q}_{1}(x,t,y)=&H(x,t,y)-\beta \tilde{J}(x,t,y)+\tilde{J}_{t}(x,t,y)+\mu(x,t,y)\tilde{J}_{x}(x,t,y)\\&+\frac{1}{2}\sigma^{2}(x,t,y)\tilde{J}_{xx}(x,t,y),\quad\forall (x,t,y)\in W^{\Xi'},\\
\tilde{q}_{0}(x,t,y)=&c(x,t,y)-\tilde{J}_{x}(x,t,y)+\tilde{J}_{y}(x,t,y),\quad\forall (x,t,y)\in (W^{\Xi'})^{c}.
\end{aligned}
\end{equation*}
By varying $\Xi'$, we have
\begin{equation*}
\begin{aligned}
\tilde{q}_{1}(x,t,y)=&H(x,t,y)-\beta \tilde{J}(x,t,y)+\tilde{J}_{t}(x,t,y)+\mu(x,t,y)\tilde{J}_{x}(x,t,y)\\
&+\frac{1}{2}\sigma^{2}(x,t,y)\tilde{J}_{xx}(x,t,y),\quad\forall (x,t,y)\in \mathcal{Q},\\
\tilde{q}_{0}(x,t,y)=&c(x,t,y)-\tilde{J}_{x}(x,t,y)+\tilde{J}_{y}(x,t,y),\quad\forall (x,t,y)\in \mathcal{Q}.
\end{aligned}
\end{equation*}
Then, $\tilde{J}$ solves the variational equality in Theorem \ref{verif-a}. Applying Theorem \ref{verif-a}, we obtain that $\tilde{J}$ is the value function under $\Xi$ and the conclusion follows.
\end{proof}

\begin{theorem}[Simultaneous characterization of the optimal value function and optimal q-functions]\label{qforopjq}Given a function $J^{*}\in\mathcal{R}^{*}$, and continuous functions $q^{*}_{0},q^{*}_{1}:\mathcal{Q}\rightarrow \mathbb{R}$ with the property that $\Xi^{*}$ defined by $W^{\Xi^{*}}:=\big\{(x,t,y)\big|q^{*}_{0}(x,t,y)>0\big\}$ is admissible and 
\begin{equation*}
\begin{aligned}
&J^{*}(x,T,y)=\tilde{F}(x,y),\\
&\min\big\{q^{*}_{1}(x,t,y),q^{*}_{0}(x,t,y)\big\}=0,
\end{aligned}
\end{equation*}
then the following three statements are equivalent:\\
(i) $J^{*}$, $q^{*}_{0}$,  $q^{*}_{1}$ are respectively the optimal value function and q-functions.\\
(ii) For any admissible singular control $\xi$,
\begin{equation}
\label{m3_a}
\begin{aligned}
&\bigg\{e^{-\beta(r-t)}J^{*}(X^{\xi}_{r},r,\xi_{r})+\int_{t}^{r}e^{-\beta(r'-t)}\Big[H(X^{\xi}_{r'},r',\xi_{r'})-q^{*}_{1}(X^{\xi}_{r'},r',\xi_{r'})\Big]dr'\\
&+\int_{t}^{r}e^{-\beta(r'-t)}\Big[c(X^{\xi}_{r'},r',\xi_{r'})-q^{*}_{0}(X^{\xi}_{r'},r',\xi_{r'})\Big]d\xi^{c}_{r'}\\
&+\sum\limits_{r'\in[t,r]}e^{-\beta(r'-t)}\int_{0}^{\Delta\xi_{r'}}\Big[c(X^{\xi}_{r'-}-u,r',\xi_{r'-}+u)-q^{*}_{0}(X^{\xi}_{r'-}-u,r',\xi_{r'-}+u)\Big]du,\  r\in[t,T]\bigg \}
\end{aligned}
\end{equation}
is an $\{\mathcal{F}_{r}\}_{r\in[t,T]}$-martingale.\\
(iii) For any admissible singular control law $\Xi$,
\begin{equation}
\label{m3_b}
\begin{aligned}
&\bigg\{e^{-\beta(r-t)}J^{*}(X^{\Xi}_{r},r,\xi^{\Xi}_{r})+\int_{t}^{r}e^{-\beta(r'-t)}\Big[H(X^{\Xi}_{r'},r',\xi^{\Xi}_{r'})-q^{*}_{1}(X^{\Xi}_{r'},r',\xi^{\Xi}_{r'})\Big]dr'\\
&+\int_{t}^{r}e^{-\beta(r'-t)}\Big[c(X^{\Xi}_{r'},r',\xi^{\Xi}_{r'})-q^{*}_{0}(X^{\Xi}_{r'},r',\xi^{\Xi}_{r'})\Big]d(\xi^{\Xi})^{c}_{r'}\\
&+\sum\limits_{r'\in[t,r]}e^{-\beta(r'-t)}\int_{0}^{\Delta\xi^{\Xi}_{r'}}\Big[c(X^{\Xi}_{r'-}-u,r',\xi^{\Xi}_{r'-}+u)-q^{*}_{0}(X^{\Xi}_{r'-}-u,r',\xi^{\Xi}_{r'-}+u)\Big]du,\  r\in[t,T]\bigg \}
\end{aligned}
\end{equation}
is an $\{\mathcal{F}_{r}\}_{r\in[t,T]}$-martingale.
\end{theorem}
\begin{proof}
We only need to show (i)$\Rightarrow$(ii) and (iii)$\Rightarrow$(i).

(i)$\Rightarrow$(ii) still follows from It\^{o}'s formula. Now we show (iii)$\Rightarrow$(i). Assuming that (\ref{m3_b}) is an $\{\mathcal{F}_{r}\}_{r\in[t,T]}$-martingale for any admissible $\Xi$, similar to the proof of (2) of Theorem \ref{qforq}, one can obtain that
\begin{equation*}
\begin{aligned}
q^{*}_{0}(x,t,y)=&c(x,t,y)-J^{*}_{x}(x,t,y)+J^{*}_{y}(x,t,y),\quad\forall (x,t,y)\in\mathcal{Q},\\
q^{*}_{1}(x,t,y)=&-\beta J^{*}(x,t,y)+J^{*}_{t}(x,t,y)+\mu(x,t,y)J^{*}_{x}(x,t,y)\\
&+\frac{1}{2}\sigma^{2}(x,t,y)J^{*}_{xx}(x,t,y)+H(x,t,y),\quad\forall (x,t,y)\in\mathcal{Q}.
\end{aligned}
\end{equation*}
Now $J^{*}$ solves 
\begin{equation*}
\begin{aligned}
&J^{*}(x,T,y)=\tilde{F}(x,y),\\
&\begin{aligned}
\min\big\{&-\beta J^{*}(x,t,y)+J^{*}_{t}(x,t,y)+\mu(x,t,y)J^{*}_{x}(x,t,y)\\
&+\frac{1}{2}\sigma^{2}(x,t,y)J^{*}_{xx}(x,t,y)+H(x,t,y),c(x,t,y)-J^{*}_{x}(x,t,y)+J^{*}_{y}(x,t,y)\big\}=0.
\end{aligned}
\end{aligned}
\end{equation*}
Applying Theorem \ref{verif} yields that $J^{*}$ is the optimal value function.
\end{proof}

We can use the PE method to update the q-functions in Theorem \ref{qforq} or update the optimal value function and the q-functions simultaneously in Theorem \ref{qforopjq}.

\subsection{Extension to infinite-horizon problem}
\label{infitime}
The above RL theory for finite-time singular control problems can be extended to infinite-time singular control problems with some minor modifications. In the infinite-time setting, the problem is defined by
\begin{equation}
\label{object1}
\begin{aligned}
&\min_{\xi }J(x,t,y;\xi):=\mathbb{E}_{x,t,y}\bigg[\int_{t}^{\infty}e^{-\beta(r-t)}H(X^{\xi}_{r},r,\xi_{r})dr\\&+\int_{t}^{\infty}e^{-\beta(r-t)}c(X^{\xi}_{r-},r,\xi_{r-})d\xi_{r}\bigg], \quad 
\forall(x,t,y)\in\mathcal{Q}:=\mathbb{R}\times[0,\infty)\times(\mathbb{R_{+}}\cup\{0\}).
\end{aligned}
\end{equation}
The definitions in Subsection 2.1 can be adapted to infinite-time setting by replacing $T$ with $\infty$.
The verification theorem is slightly different as we replace the integrability condition (3) in Theorem \ref{verif} with the transversality condition (3) in Theorem \ref{verif1} for the infinite-time setting.
\begin{theorem}[verification theorem]
\label{verif1}
Given a function $V(x,t,y)$ which have derivatives $V_{x}$ and $V_{y}$ on $\mathcal{Q}$, define $\hat{\Xi}=(W^{\hat{\xi}},(W^{\hat{\xi}})^{c})$ by
\begin{equation*}
W^{\hat{\xi}}:=\big\{(x,t,y)\in\mathcal{Q}\big|c(x,t,y)-V_{x}(x,t,y)+V_{y}(x,t,y)>0\big\}.
\end{equation*}
Assume that the following conditions hold:
(1) $V\in C^{2,1,1}(\mathcal{Q})$ and the following HJB equations hold:
\begin{equation*}
\begin{aligned}
&\!\min\!\big\{\!H(x,t,y)\!-\!\beta V(x,t,y)\!+\!V_{t}(x,t,y)\!+\!\mu(x,t,y)V_{x}(x,t,y)\!+\!\frac{1}{2}\sigma^{2}(x,t,y)V_{xx}(x,t,y),\!\\
&\qquad\ c(x,t,y)-V_{x}(x,t,y)+V_{y}(x,t,y)\big\}=0,\quad \forall (x,t,y)\in\mathcal{Q}.
\end{aligned}
\end{equation*}
(2) $\hat{\Xi}$ is an admissible singular control law.\\
(3) For any admissible $\xi$,
\begin{equation}
\label{trans}
\lim\limits_{r\rightarrow\infty}\mathbb{E}_{x,t,y}e^{-\beta(r-t)}V(X^{\xi}_{r},r,\xi_{r})=0.
\end{equation}
Then the singular control generated by $\hat{\Xi}$ at $(x,t,y)$, denoted by $\xi^{x,t,y,\Xi}$, is an optimal singular control for Problem (\ref{object1}) starting at $(x,t,y)$. Moreover, $V(x,t,y)$ is the value function of $\xi^{x,t,y,\Xi}$ and thus the optimal value function for Problem (\ref{object1})  starting at $(x,t,y)$.
\end{theorem}

The previous PE, PI, and q-learning theorems are directly applicable to the infinite-horizon problem by replacing the terminal conditions with transversality conditions similar to (\ref{trans}).

\section{q-Learning Algorithms and Numerical Examples}
\label{algorithm}
This section applies the theoretical results in the previous section to devise some continuous-time model-free RL algorithms for singular controls.

Subsection \ref{4gen} introduces three learning schemes in the general case, covering both finite and infinite horizon problems. Then, Subsection \ref{4case} illustrates the numerical performance of a simplified algorithm in a concrete example.

\subsection{General case: q-learning algorithms}
\label{4gen}

In the general case, the previous martingale characterization results motivate three learning schemes to learn the optimal singular control law.

The first two schemes follow the actor-critic approach. In each iteration, the agent evaluates all the functions needed, including the value function and the q functions, and then updates the policy. The first learning scheme is to evaluate the value function and the q functions successively using Theorem \ref{PET} and Theorem \ref{qforq}. The second learning scheme improves the first one by evaluating the value function and the q functions simultaneously, using Theorem \ref{qforjq}.

The last scheme differs from the above two in that it is critic-only. Benefited from Theorem \ref{qforopjq}, the third learning scheme directly learns the optimal value function, omitting the step to update the policy.

The above three learning schemes are summarized as follows:
\begin{itemize}
\item {\bf Scheme-1:} In each iteration, first evaluate the value function by PE Theorem \ref{PET}, then apply q-learning Theorem \ref{qforq} to evaluate the q functions,  finally update the policy by PI Theorem \ref{PIT}.
\item{\bf Scheme-2:} In each iteration, first simultaneously update the value function and q functions by Theorem \ref{qforjq}, then update the policy by PI Theorem \ref{PIT}.
\item{\bf Scheme-3:} In each iteration, update the optimal value function and q functions by Theorem \ref{qforopjq}. 
\end{itemize}

\begin{remark}
We emphasize some differences between the three schemes. First, {\bf Scheme-1} has the most steps in each iteration, but almost no constraints on the structure of the value function and q functions, while {\bf Scheme-3} only has one step in each iteration, but also the most constraints on the function structure, i.e., it requires that
\begin{equation*}
\begin{aligned}
&J^{*}(x,T,y)=\tilde{F}(x,y),\\
&\min\big\{q^{*}_{1}(x,t,y),q^{*}_{0}(x,t,y)\big\}=0.
\end{aligned}
\end{equation*}
Second, the value function and q functions in {\bf Scheme-3} differ from the other two in that they do not rely on the policy, which presents the optimal case.
\end{remark}

\begin{remark}
The policy improvement or even convergence in {\bf Scheme-1} and {\bf Scheme-2} relies on the admissibility of updated policies during the whole iteration process as required by Theorem \ref{PIT}. In specific problems where only environment parameters are unknown, such admissibility can possibly be guaranteed. Sometimes, even convergence can be proved; see our example in Subsection \ref{4case}. However, in model-free problems with no known model structure, the value functions and q functions are usually parameterized by neural networks, and theoretical convergence is usually hard to obtain.
\end{remark}

To facilitate the above learning schemes, both PE and PI need to be implemented. The PI can be directly implemented once the q functions are correctly evaluated. For the PE of the value function and q functions, we can apply the following two types of loss functions introduced in \cite{jia2022policy} to utilize the martingale characterizations in Theorems \ref{PET}, \ref{qforq}, \ref{qforjq}, and \ref{qforopjq}:
\begin{itemize}
\item[\textbf{(a)}] Minimizing the square-error martingale loss function.
\item[\textbf{(b)}] Applying the martingale orthogonality condition.
\end{itemize}

In singular control problems, the calculation of the martingale loss in (a) and the integration w.r.t the martingale in (b) differ from the counterparts in regular control problems. The continuous part and the jumps in the martingale process have to be treated separately, as shown in the following two example algorithms.
\vskip 5pt
As the first example, for the finite horizon problem on $[0, T]$, we present an offline martingale-loss q-learning algorithm by combining the {\bf Scheme-1} with approach (a) for utilizing martingale characterization. Suppose that we have $N$ mesh grids $0=t_{0}<t_{1}<\cdots \ t_{N}=T$ discretizing the time horizon $[0, T]$, and denote the running reward/penalty and control cost in each step by
\begin{equation*}
H_{t_{n}}: =\int_{t_{n}}^{t_{n+1}}e^{-\beta r}H(X_{r}, r, \xi_{r})dr, \quad c_{t_{n}}: =\int_{t_{n}}^{t_{n+1}-}e^{-\beta r}c(X_{r-}, r, \xi_{r-})d\xi_{r}, \quad \forall n<N,
\end{equation*}
denote the terminal reward/penalty at $T-$ by $F_{t_{N}-}=F_{T-}: =F(X_{T-}. \xi_{T-})$
All $H_{t_{n}}$, $c_{t_{n}}$ and $F_{t_{N}-}$ are observable in practice. Suppose the functions $J,q_{1},q_{0}$ are parameterized by $\theta^{J},\theta^{q_{0}},\theta^{q_{1}}$. For the martingale $\{M_{r-}\}_{r\in[0,T]}$ in Theorem \ref{PET}, we have
\begin{equation*}
M_{t_{N}-}-M_{t_{n}-}= e^{-\beta T}F_{t_{N}-}-e^{-\beta t_{n}}J^{\theta^{J}}(X_{t_{n}-}, t_{n}, \xi_{t_{n}-};\Xi)+\sum\limits_{i=n}^{N-1}H_{t_{i}}+\sum\limits_{i=n}^{N-1}c_{t_{i}}.
\end{equation*}
 According to \cite{jia2022policy}, for the loss function in (a), the updated parameters $\theta^{J}$ should minimize
 \begin{equation*}
 \begin{aligned}
 \frac{1}{2}\mathbb{E}\int_{t}^{T}[M_{T-}-M_{r-}]^{2}dr\approx&\frac{1}{2}\mathbb{E}\sum\limits_{n=0}^{N-1}[M_{t_{N}-}-M_{t_{n}-}]^{2}\Delta t\\
 =&\frac{1}{2}\mathbb{E}\sum\limits_{n=0}^{N-1}[e^{-\beta T}F_{t_{N}-}-e^{-\beta t_{n}}J^{\theta^{J}}(X_{t_{n}-}, t_{n}, \xi_{t_{n}-};\Xi)+\sum\limits_{i=n}^{N-1}H_{t_{i}}+\sum\limits_{i=n}^{N-1}c_{t_{i}}]^{2}\Delta t
 \end{aligned}
 \end{equation*}
 over $\theta^{J}$. The standard approach is to perform gradient descent with a learning rate to control the pace of the update. Suppose that the initial learning rate is $\alpha^{J}$ and the learning rate at episode $m$ is $\alpha^{J}l(m)$, with $l(\cdot)$ satisfying $l(1)=1$ being called the learning rate schedule function in \cite{jia2023q}. Then, the parameter $\theta^{J}$ at episode $m$ should be updated by
\begin{equation}
\label{iJ}
\begin{aligned}
\theta^{J}\leftarrow\theta^{J}+\alpha^{J}l(m)\Delta t\sum\limits_{n=0}^{N-1}\Big[&e^{-\beta T}F_{t_{N}-}-e^{-\beta t_{n}}J^{\theta^{J}}(X_{t_{n}-}, t_{n}, \xi_{t_{n}-};\Xi)+\sum\limits_{i=n}^{N-1}c_{t_{i}}\\&+\sum\limits_{i=n}^{N-1}H_{t_{i}}\Big]e^{-\beta t_{n}}\frac{\partial}{\partial\theta^{J}}J^{\theta^{J}}(X_{t_{n}-}, t_{n}, \xi_{t_{n}-};\Xi).
\end{aligned}
\end{equation}

Similarly, we can compute the martingale difference for the martingale in Theorem \ref{qforq}. The iteration steps for the parameters $\theta^{q_{0}}$ and $\theta^{q_{1}}$ are then obtained as:
\begin{equation}
\label{iq1}
\begin{aligned}
\theta^{q_{1}}\leftarrow&\theta^{q_{1}}+\alpha^{q_{1}}l(m)(\Delta t)^{2}\sum\limits_{n=0}^{N-1}\bigg\{\Big[e^{-\beta T}F_{t_{N}-}-e^{-\beta t_{n}}J^{\theta^{J}}(X_{t_{n}-}, t_{n}, \xi_{t_{n}-};\Xi)+\sum\limits_{i=n}^{N-1}\big[c_{t_{i}}\\&
-e^{-\beta t_{i}}q_{0}^{\theta^{q_{0}}}(X_{t_{i}-}, t_{i}, \xi_{t_{i}-};\Xi)\Delta\xi_{t_{i}}
-e^{-\beta t_{i}}q_{0}^{\theta^{q_{0}}}(X_{t_{i}}, t_{i}, \xi_{t_{i}};\Xi)(\xi_{t_{i+1}-}-\xi_{t_{i}})\big]\\
&+\sum\limits_{i=n}^{N-1}\big[H_{t_{i}}-e^{-\beta t_{i}}q_{1}^{\theta^{q_{1}}}(X_{t_{i}}, t_{i}, \xi_{t_{i}};\Xi)\Delta t\big]\Big]\sum\limits_{i=n}^{N-1}e^{-\beta t_{i}}\frac{\partial}{\partial\theta^{q_{1}}}q_{1}^{\theta^{q_{1}}}(X_{t_{i}}, t_{i}, \xi_{t_{i}};\Xi)\bigg\}, 
\end{aligned}
\end{equation}
\begin{equation}
\label{iq0}
\begin{aligned}
&\theta^{q_{0}}\leftarrow\theta^{q_{0}}+\alpha^{q_{0}}l(m)\Delta t\sum\limits_{n=0}^{N-1}\bigg\{\Big[e^{-\beta T}F_{t_{N}-}-e^{-\beta t_{n}}J^{\theta^{J}}(X_{t_{n}-}, t_{n}, \xi_{t_{n}-};\Xi)+\sum\limits_{i=n}^{N-1}\big[c_{t_{i}}\\&
-e^{-\beta t_{i}}q_{0}^{\theta^{q_{0}}}(X_{t_{i}-}, t_{i}, \xi_{t_{i}-};\Xi)\Delta\xi_{t_{i}}-e^{-\beta t_{i}}q_{0}^{\theta^{q_{0}}}(X_{t_{i}}, t_{i}, \xi_{t_{i}};\Xi)(\xi_{t_{i+1}-}-\xi_{t_{i}})\big]\\&+\sum\limits_{i=n}^{N-1}\big[H_{t_{i}}-e^{-\beta t_{i}}q_{1}^{\theta^{q_{1}}}(X_{t_{i}}, t_{i}, \xi_{t_{i}};\Xi)\Delta t\big]\Big]
\sum\limits_{i=n}^{N-1}e^{-\beta t_{i}}\big[\frac{\partial}{\partial\theta^{q_{0}}}q_{0}^{\theta^{q_{0}}}(X_{t_{i}-}, t_{i}, \xi_{t_{i}-};\Xi)\Delta\xi_{t_{i}}\\&\qquad\qquad\qquad\qquad\qquad\qquad+\frac{\partial}{\partial\theta^{q_{0}}}q_{0}^{\theta^{q_{0}}}(X_{t_{i}}, t_{i}, \xi_{t_{i}};\Xi)(\xi_{t_{i+1}-}-\xi_{t_{i}})\big]\bigg\}. 
\end{aligned}
\end{equation}
Combining the above PE implementation with the PI in Theorem \ref{PIT} yields the following Algorithm \ref{ml-example}. The algorithm is offline, because in each episode, the parameters and the policy are updated using the data of the whole episode. 
\begin{algorithm}[h]
\caption{Offline q-learning algorithm using martingale loss (finite-horizon)}
\label{ml-example}
\renewcommand{\algorithmicrequire}{\textbf{INPUT:}}
\begin{algorithmic}
\REQUIRE
Initial state $x_{0}$ and singular control value $y_{0}$, terminal time $T>0$, number of episodes $M$, number of mesh grids $N$, proper parameterization $\phi^{\theta_{\phi}}(\cdot,\cdot,\cdot;\cdot)$ for $\phi=J,q_{0},q_{1}$, initial learning rates $\alpha^{J},\alpha^{q_{0}},\alpha^{q_{1}}$ and a learning rate schedule function $l(\cdot)$. 
\STATE Initialize $\theta^{\phi}$ for $\phi=J,q_{0},q_{1}$, and initialize $\Xi$. Calculate the time step $\Delta t=\frac{T}{N}$ and set $t_{n}: =n\frac{T}{N}, n=0, 1, 2, \cdots, N$.
\FOR {episode $m=1,2,\cdots,M$}
\STATE Initialize $n=0$ and $(X_{t_{n}-}, t_{n}, \xi_{t_{n}-})=(x_{0}, 0, y_{0})$.
\STATE Simulate a training sample path, which can be collected either on-policy or off-policy. Specifically, for $n<N$, obtain the state-time-control triples $(X_{t_{n}}, t_{n}, \xi_{t_{n}})$ and $(X_{t_{n+1}-}, t_{n+1}, \xi_{t_{n+1}-})$, the reward/penalty $H_{t_{n}}$, and the control cost $c_{t_{n}}$. For $n=N$, obtain the terminal reward/penalty $F_{t_{n}-}$.
\STATE Update $\theta^{J},\theta^{q_{0}},\theta^{q_{1}}$ respectively by (\ref{iJ})$\sim$(\ref{iq0}).

\STATE Update $\Xi$ by $W^{\Xi}\leftarrow int(\{(x',t',y')|q_{1}^{\theta^{q_{1}}}(x',t',y';\Xi)\le 0\  \text{and}\ q_{0}^{\theta^{q_{0}}}(x',t',y';\Xi)\ge 0\})$.
\ENDFOR
\end{algorithmic}
\end{algorithm}

The second algorithm is designed for the infinite-horizon problem. We present an online temporal-difference q-learning algorithm combining the {\bf Scheme 3} and the martingale orthogonality condition in (b). Under {\bf Scheme 3}, only PE is needed for evaluating the optimal value function and corresponding q functions with Theorem \ref{qforopjq}. When using approach (b) for utilizing the martingale property, one can set the test function as the derivatives with respect to the parameter and use stochastic approximation to iterate. \cite{jia2022policy} shows that such setting leads to the TD(0) algorithm in \cite{sutton1988learning}. Taking the same choice for the test function and using stochastic approximation, we derive that in the online training the parameter $\theta^{\phi},\phi=J,q_{0},q_{1}$ at step $n$ of episode $m$ can be updated by
\begin{equation*}
\begin{aligned}
\theta^{\phi}\leftarrow&\theta^{\phi}+\alpha^{\phi}l(m)\int_{t_{n}}^{t_{n+1}-}\frac{\partial}{\partial \theta^{\phi}}\phi^{\theta^{\phi}}(X_{t},t,\xi_{t})dM^{\theta}_{t}\\
\approx&\theta^{\phi}+\alpha^{\phi}l(m)\Big[G^{c}_{n}\frac{\partial}{\partial \theta^{\phi}}\phi^{\theta^{\phi}}(X_{t_{n}},t_{n},\xi_{t_{n}})+G^{d}_{n}\frac{\partial}{\partial \theta^{\phi}}\phi^{\theta^{\phi}}(X_{t_{n}-},t_{n},\xi_{t_{n}-})\Big] ,\quad\phi=J,q_{0},q_{1},
\end{aligned}
\end{equation*}
where $M^{\theta}:=M^{(\theta^{J},\theta^{q_{0}},\theta^{q_{1}})}_{t}$ is the martingale in Theorem \ref{qforopjq} and $G^{c}_{n}$, $G^{d}_{n}$ are the continuous and discrete part of its  temporal difference at step $n$, i.e.,
\begin{align*}
M^{\theta}_{t}=&e^{-\beta t}J^{\theta^{J}}(X_{t},t,\xi_{t})+\int_{0}^{t}e^{-\beta r}\big[H(X_{r},r,\xi_{r})-q^{\theta^{q_{1}}}_{1}(X_{r},r,\xi_{r})\big]dr\\&+\int_{0}^{t}e^{-\beta r}\big[c(X_{r},r,\xi_{r})-q^{\theta^{q_{0}}}_{0}(X_{r},r,\xi_{r})\big]d\xi^{c}_{r}\\&+\sum\limits_{r\in[0,t]}e^{-\beta r}\int_{0}^{\Delta\xi_{r}}\big[c(X_{r-}-u,r,\xi_{r-}+u)-q^{\theta^{q_{0}}}_{0}(X_{r-}-u,r,\xi_{r-}+u)\big]du,\\
G^{c}_{n}: =&M^{\theta}_{t_{n+1}-}-M^{\theta}_{t_{n}}\\\approx&e^{-\beta t_{n+1}}J^{\theta^{J}}(X_{t_{n+1}-}, t_{n+1}, \xi_{t_{n+1}-})\!-\!e^{-\beta t_{n}}J^{\theta^{J}}(X_{t_{n}}, t_{n}, \xi_{t_{n}})\!+\!\big[H_{t_{n}}\!-\!e^{-\beta t_{n}}q_{1}^{\theta^{q_{1}}}(X_{t_{n}}, t_{n}, \xi_{t_{n}})\Delta t\big]\\&+\big[c^{c}_{t_{n}}-e^{-\beta t_{n}}q_{0}^{\theta^{q_{0}}}(X_{t_{n}}, t_{n}, \xi_{t_{n}})(\xi_{t_{n+1}-}-\xi_{t_{n}})\big] ,\\
G^{d}_{n}: =&M^{\theta}_{t_{n}}-M^{\theta}_{t_{n}-}\\=&e^{-\beta t_{n}}\big[J^{\theta^{J}}(X_{t_{n}}, t_{n}, \xi_{t_{n}})-J^{\theta^{J}}(X_{t_{n}-}, t_{n}, \xi_{t_{n}-})\big]+\big[c^{d}_{t_{n}}-e^{-\beta t_{n}}q_{0}^{\theta^{q_{0}}}(X_{t_{n}-}, t_{n}, \xi_{t_{n}-})\Delta\xi_{t_{n}}\big], 
\end{align*}
where $c^{c}_{t_{n}}$ and $c^{d}_{t_{n}}$ are respectively the continuous and discrete parts of the control cost in $[t_{n},t_{n+1})$, i.e., 
\begin{equation*}
\begin{aligned}
&c^{d}_{t_{n}}: =e^{-\beta t_{n}}c(X_{t_{n}-}, t_{n}, \xi_{t_{n}-})\Delta\xi_{t_{n}}, \quad \forall n<N, \\
&c^{c}_{t_{n}}: =\int_{t_{n}+}^{t_{n+1}-}e^{-\beta r}c(X_{r-}, r, \xi_{r-})d\xi_{r}=c_{t_{n}}-c^{d}_{t_{n}}, \quad \forall n<N. 
\end{aligned}
\end{equation*}
By convention, for learning infinite-horizon problems, we set a sufficiently large truncation time $T$ and train over the horizon $[0, T]$. The resulting algorithm is shown in Algorithm \ref{td0-example}. The algorithm is online as the agent can collect data and update the parameters immediately in each step during a single episode.

\begin{algorithm}[h]
\caption{ Online q-learning algorithm using temporal difference (infinite-horizon)}
\label{td0-example}
\renewcommand{\algorithmicrequire}{\textbf{INPUT:}}
\begin{algorithmic}
\REQUIRE
Initial state $x_{0}$ and singular control value $y_{0}$, truncation time $T>0$, number of episodes $M$, number of mesh grids $N$, proper parameterization $\phi^{\theta_{\phi}}(\cdot,\cdot,\cdot)$ for $\phi=J,q_{0},q_{1}$ in the optimal case, initial learning rates $\alpha^{J},\alpha^{q_{0}},\alpha^{q_{1}}$ and a learning rate schedule function $l(\cdot)$. 
\STATE Initialize $\theta^{\phi}$ for $\phi=J,q_{0},q_{1}$. Calculate the corresponding $\Xi^{*}$ and the time step $\Delta t=\frac{T}{N}$ and set $t_{n}: =n\frac{T}{N}, n=0, 1, 2, \cdots, N$.
\FOR {episode $m=1,2, \cdots, M$}
\STATE Initialize $n=0$ and $(X_{t_{n}-}, t_{n}, \xi_{t_{n}-})=(x_{0}, 0, y_{0})$.
\WHILE {$n<N$}
\STATE Apply $\Xi^{*}$ to the environment and obtain the state-time-control triples $(X_{t_{n}}, t_{n}, \xi_{t_{n}})$ and $(X_{t_{n+1}-}, t_{n+1}, \xi_{t_{n+1}-})$, the reward/penalty $H_{t_{n}}$, and the continuous and discrete control costs $c^{c}_{t_{n}}, c^{d}_{t_{n}}$.

\STATE Update $\theta^{J},\theta^{q_{0}},\theta^{q_{1}}$ by
\begin{equation*}
\begin{aligned}
\theta^{J}\leftarrow&\theta^{J}+\alpha^{J}l(m)\Big[G^{c}_{n}\frac{\partial}{\partial\theta^{J}}J^{\theta^{J}}(X_{t_{n}}, t_{n}, \xi_{t_{n}})+G^{d}_{n}\frac{\partial}{\partial\theta^{J}}J^{\theta^{J}}(X_{t_{n}-}, t_{n}, \xi_{t_{n}-})\Big], \\
\theta^{q_{1}}\leftarrow&\theta^{q_{1}}+\alpha^{q_{1}}l(m)\Big[G^{c}_{n}\frac{\partial}{\partial\theta^{q_{1}}}q_{1}^{\theta^{q_{1}}}(X_{t_{n}}, t_{n}, \xi_{t_{n}})+G^{d}_{n}\frac{\partial}{\partial\theta^{q_{1}}}q_{1}^{\theta^{q_{1}}}(X_{t_{n}-}, t_{n}, \xi_{t_{n}-})\Big], \\
\theta^{q_{0}}\leftarrow&\theta^{q_{0}}+\alpha^{q_{0}}l(m)\Big[G^{c}_{n}\frac{\partial}{\partial\theta^{q_{0}}}q_{0}^{\theta^{q_{0}}}(X_{t_{n}}, t_{n}, \xi_{t_{n}})+G^{d}_{n}\frac{\partial}{\partial\theta^{q_{0}}}q_{0}^{\theta^{q_{0}}}(X_{t_{n}-}, t_{n}, \xi_{t_{n}-})\Big].
\end{aligned}
\end{equation*}
\ENDWHILE
\ENDFOR
\end{algorithmic}
\end{algorithm}

\subsection{An infinite-horizon example with known control cost}
\label{4case}
We present a simple infinite horizon example where the control cost is known and the closed-form solution is available.

The example here is a simplification of the irreversible reinsurance problem in \cite{yan2022irreversible}, which studies a much more complex problem. We focus on such a simple example to better illustrate the implementation and effectiveness of our RL framework.
Suppose that $\mu,\sigma,c$ are constant and $H(x,t,y)=e^{ax}$ with constant $a>0,\beta>\mu a+\frac{1}{2}\sigma^{2}a^{2}$. The latter constraint is needed to ensure the finite value of the uncontrolled objective value. Assume that the model parameters $a,\mu$ and $\sigma$ are unknown. The HJB variational inequality can be written as
\begin{equation*}
\min\big\{H(x)-\beta V(x)+\mu V_{x}(x)+\frac{1}{2}\sigma^{2}V_{xx}(x),\ c-V_{x}(x)\big\}=0.
\end{equation*}
The general solution in the waiting region has the following form:
\begin{equation*}
V(x)=\frac{1}{\beta-\mu a-\frac{1}{2}\sigma^{2}a^{2}}e^{ax}+C_{1}e^{\lambda_{1}x}+C_{2}e^{\lambda_{2}x},
\end{equation*}
where
\begin{equation*}
\lambda_{1}=\frac{-\mu-\sqrt{\mu^{2}+2\beta\sigma^{2}}}{\sigma^{2}},\ \lambda_{2}=\frac{-\mu+\sqrt{\mu^{2}+2\beta\sigma^{2}}}{\sigma^{2}}.
\end{equation*}
By the boundary condition $\lim\limits_{x\rightarrow-\infty}V(x)=0$ and the fact that $\lambda_{1}<0<a<\lambda_{2}$, we have  $C_{1}=0$ and
\begin{equation*}
V(x)=\frac{1}{\beta-\mu a-\frac{1}{2}\sigma^{2}a^{2}}e^{ax}+C_{2}e^{\lambda_{2}x}
\end{equation*}
with $C_{2}$ to be determined. Conjecture that the waiting region is $\{(x,t,y)|x<\hat{x}\}$, then, using $V_{x}(\hat{x})=c$, $ V_{xx}(\hat{x})=0$, we obtain
{\setlength{\abovedisplayskip}{2pt}%
 \setlength{\belowdisplayskip}{2pt}%
\begin{align}
&\hat{x}=\frac{1}{a}\ln(\frac{\lambda_{2}c(\beta-\mu a-\frac{1}{2}\sigma^{2}a^{2})}{\lambda_{2}a-a^{2}}),\quad
 C_{2}=-\frac{a^{2}}{\lambda_{2}^{2}(\beta-\mu a-\frac{1}{2}\sigma^{2}a^{2})}e^{(a-\lambda_{2})\hat{x}}.\label{hx}
 \end{align}
}
One can verify that 
\begin{equation*}
V(x)=
\begin{cases}
\frac{1}{\beta-\mu a-\frac{1}{2}\sigma^{2}a^{2}}e^{ax}+C_{2}e^{\lambda_{2}x}, & x<\hat{x},\\
c(x-\hat{x})+V(\hat{x}), & x\ge \hat{x}
\end{cases}
\end{equation*}
satisfies the conditions in the verification theorem.

In this example, we can first rigorously show that the policy improvement iteration of the singular control laws indeed converges to the optimal singular control law starting from any initial singular control law.
\begin{proposition}
\label{converg}
In this example, given any initial singular control law $\Xi_{0}:=(W^{\Xi_{0}},(W^{\Xi_{0}})^{c})$, where $W^{\Xi_{0}}=\{x|x<x_{0}\}$ for some $x_{0}$, let $\{\Xi_{n}:=(W^{\Xi_{n}},(W^{\Xi_{n}})^{c})\}$ be the sequence of singular control laws generated by the PI Theorem \ref{PIT}, there exist a sequence $\{x_{n}\}$ such that $W^{\Xi_{n}}=\{x|x<x_{n}\}$ and $\lim\limits_{n\rightarrow\infty}x_{n}=\hat{x}$, where $\hat{x}$ is given by (\ref{hx}). In other words, the sequence $\{\Xi_{n}\}$ converges to the optimal singular control law $\hat{\Xi}:=(W^{\hat{\Xi}},(W^{\hat{\Xi}})^{c})$ with $W^{\hat{\Xi}}=\{x|x<\hat{x}\}$. 
\end{proposition}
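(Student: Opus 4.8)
The plan is to reduce the region iteration to a scalar recursion on the free boundary of the waiting region and then to exploit the monotone decrease of value functions supplied by the policy improvement theorem (Theorem \ref{PIT}, in its infinite-horizon form) to force that recursion to converge to $\hat x$. \emph{Step 1: value functions of threshold laws.} For a law $\Xi$ with $W^{\Xi}=\{x<z\}$ I would solve the linear ODE $-\beta J+\mu J_{x}+\tfrac12\sigma^{2}J_{xx}+e^{ax}=0$ on $(-\infty,z)$. The transversality condition kills the $e^{\lambda_{1}x}$-branch (since $\lambda_{1}<0$), and the Skorohod-reflection smooth fit $J_{x}(z^{-})=c$ pins down the remaining constant, so that, with $\kappa:=\beta-\mu a-\tfrac12\sigma^{2}a^{2}>0$,
\[
J(x;\Xi)=\tfrac{1}{\kappa}e^{ax}+C_{2}(z)e^{\lambda_{2}x}\ \ (x<z),\qquad J(x;\Xi)=c(x-z)+J(z;\Xi)\ \ (x\ge z),
\]
with $C_{2}(z)=\tfrac{1}{\lambda_{2}}\bigl(c-\tfrac{a}{\kappa}e^{az}\bigr)e^{-\lambda_{2}z}$. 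A direct computation gives $C_{2}'(z)=-\tfrac{1}{\lambda_{2}}e^{-\lambda_{2}z}J_{xx}(z^{-})$, and comparing $e^{az}$ with $e^{a\hat x}$ from (\ref{hx}) shows $J_{xx}(z^{-})$ has the sign of $\hat x-z$; hence $z\mapsto C_{2}(z)$ is strictly decreasing on $(-\infty,\hat x)$, strictly increasing on $(\hat x,\infty)$, with minimum at $\hat x$ equal to the constant $C_{2}$ in the closed-form optimal value function $V$.

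\emph{Step 2: the PI map preserves the threshold structure.} On $\{x<z\}$ one has $q_{1}(\cdot;\Xi)\equiv 0$, so a point lies in $W^{\mathcal{I}(\Xi)}$ iff $q_{0}=c-J_{x}\ge 0$; on $[z,\infty)$ one has $q_{0}(\cdot;\Xi)\equiv 0$, so membership reduces to $q_{1}\le 0$, where for $x\ge z$ the function $q_{1}(x)=-\beta\bigl(c(x-z)+J(z;\Xi)\bigr)+\mu c+e^{ax}$ is convex with $q_{1}(z^{+})=-\tfrac12\sigma^{2}J_{xx}(z^{-})$. The one elementary estimate that matters is that $a<\lambda_{2}$, via the characteristic relation $\tfrac12\sigma^{2}\lambda_{2}+\mu=\beta/\lambda_{2}$, yields $e^{a\hat x}\ge\beta c/a$, i.e.\ the minimizer of $q_{1}$ lies at or below $\hat x$. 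Feeding this, together with the sign of $J_{xx}(z^{-})$ and the fact that $J_{xx}$ changes sign at most once (so $J_{x}$ is monotone increasing when $z\le\hat x$ and unimodal --- first increasing, then decreasing --- when $z>\hat x$), into the two cases gives: when $z<\hat x$, $J_{x}<c$ throughout $(-\infty,z)$, $q_{1}(z^{+})<0$, and $q_{1}$ vanishes exactly once on $(z,\infty)$ at a point $z'\ge\hat x$ (using $q_{1}(\hat x;\Xi)\le 0$, which follows from $J(\cdot;\Xi)\ge V$); when $z>\hat x$, $q_{1}>0$ on all of $[z,\infty)$ while $J_{x}$ meets the level $c$ exactly once on its increasing branch, at a point $z'\le\hat x$ (using $J_{x}(\hat x^{-};\Xi)\ge V_{x}(\hat x)=c$); and when $z=\hat x$ the region is unchanged. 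Consequently every iterate is again a threshold law $W^{\Xi_{n}}=\{x<x_{n}\}$ (automatically admissible, since $[x_{n},\infty)$ is bounded below in $x$), the induced boundary map $x_{n}\mapsto x_{n+1}$ sends each side of $\hat x$ to the opposite (weak) side, and $\hat x$ is its unique fixed point.

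\emph{Step 3: convergence.} Theorem \ref{PIT} gives $J(\cdot;\Xi_{n+1})\le J(\cdot;\Xi_{n})$ pointwise; restricting to the common waiting region this reads $C_{2}(x_{n+1})\le C_{2}(x_{n})$, so $C_{2}(x_{n})$ decreases to some $C_{2,\infty}\ge C_{2}(\hat x)$ (the lower bound from Step 1). Since $C_{2}(\cdot)$ is a strictly monotone homeomorphism on each side of $\hat x$ and, by Step 2, the iterates $x_{n}$ alternate sides of $\hat x$ (unless some $x_{n}=\hat x$, when there is nothing to prove), the below-$\hat x$ iterates increase to some $x_{-}^{\infty}\le\hat x$ and the above-$\hat x$ iterates decrease to some $x_{+}^{\infty}\ge\hat x$, with $C_{2}(x_{-}^{\infty})=C_{2}(x_{+}^{\infty})=C_{2,\infty}$. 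If $C_{2,\infty}>C_{2}(\hat x)$, then $x_{-}^{\infty}<\hat x<x_{+}^{\infty}$; letting $\bar J$, $\tilde J$ denote the value functions of $\{x<x_{+}^{\infty}\}$, $\{x<x_{-}^{\infty}\}$, the equality $C_{2}(x_{-}^{\infty})=C_{2}(x_{+}^{\infty})$ forces $\bar J=\tilde J$ on $(-\infty,x_{-}^{\infty}]$, hence $\bar J_{x}=c$ both at $x_{-}^{\infty}$ (via the smooth fit of $\tilde J$) and at $x_{+}^{\infty}$; since $\bar J_{x}$ can equal $c$ at two points only if $C_{2,\infty}<0$, in which case $\bar J_{x}$ is unimodal, we get $\bar J_{x}>c$ on $(x_{-}^{\infty},x_{+}^{\infty})$, so $\bar J(z)>c(z-x_{-}^{\infty})+\bar J(x_{-}^{\infty})$ there. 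On the other hand, passing to the limit in $J(\cdot;\Xi_{n+1})\le J(\cdot;\Xi_{n})$ along the alternating subsequences --- the value functions depending continuously on the threshold --- yields $\bar J(z)\le c(z-x_{-}^{\infty})+\tilde J(x_{-}^{\infty})$ for $z\in(x_{-}^{\infty},x_{+}^{\infty})$, a contradiction. Hence $C_{2,\infty}=C_{2}(\hat x)$, and strict monotonicity of $C_{2}(\cdot)$ on each side forces $x_{n}\to\hat x$, i.e.\ $\Xi_{n}\to\hat\Xi$ with $W^{\hat\Xi}=\{x<\hat x\}$.

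I expect the computational heart to be Step 2: the estimate $e^{a\hat x}\ge\beta c/a$ (equivalently $a\le\lambda_{2}$) is precisely what prevents the reflecting region from contributing a second connected piece to the new waiting region once $x_{n}\ge\hat x$, and the shape analysis of $J_{x}$ (monotone or unimodal, with $J_{xx}$ changing sign at most once) is what makes the new region a genuine half-line with a well-located boundary. The conceptual obstacle is excluding a true two-point cycle of the boundary map in Step 3, and that is exactly where the monotone decrease of the value functions from the PI theorem is indispensable.
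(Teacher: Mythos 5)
Your proposal is correct, and its first two steps run parallel to the paper's own proof: the same closed-form threshold value function with constant $C_{2}(z)$ (the paper's $K_{2}(\bar x)$), the same observation that this constant is strictly decreasing below $\hat x$ and increasing above with minimum at $\hat x$, and the same case analysis showing that the improved region is again a half-line whose boundary lands weakly on the opposite side of $\hat x$. You even make explicit two points the paper leaves implicit: the estimate $a e^{a\hat x}\ge \beta c$ (equivalently $a\le\lambda_{2}$), which is what really justifies $q_{1}>0$ on the whole reflection region when $z>\hat x$, and the shortcut $q_{1}(\hat x;\Xi)=\beta\big(V(\hat x)-J(\hat x;\Xi)\big)\le 0$ obtained from $J\ge V$, where the paper instead argues through the sign of $\partial_{x}\big[K_{1}e^{ax}+K_{2}(x)e^{\lambda_{2}x}-cx\big]$. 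Where you genuinely diverge is Step 3. The paper never invokes the monotone improvement $J(\cdot;\Xi_{n+1})\le J(\cdot;\Xi_{n})$; it closes by tracking the boundary recursion itself, using that consecutive boundaries share the same $K_{2}$-value so that, starting above $\hat x$, the boundary after two iterations lies strictly between $\hat x$ and its previous above-$\hat x$ location, and it then asserts convergence to $\hat x$ rather tersely. You instead translate the policy-improvement monotonicity from Theorem \ref{PIT} into monotonicity of $C_{2}(x_{n})$, extract the one-sided limits $x_{-}^{\infty}\le\hat x\le x_{+}^{\infty}$, and exclude the strict straddling case by a limiting contradiction between the improvement inequality and the unimodality of $\bar J_{x}$. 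Your route buys a fully explicit identification of the limit as $\hat x$ (arguably filling in the final step the paper glosses over), at the price of leaning on the infinite-horizon version of Theorem \ref{PIT}, whereas the paper's argument stays entirely within the explicit boundary dynamics.
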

\begin{proof}
Given any $\Xi=(W^{\Xi},(W^{\Xi})^{c})$ with  $W^{\Xi}=\{x|x<\bar{x}\}$, according to Theorem \ref{verif-a}, the value function $J=J(x;\Xi)$ is characterized by 
\begin{equation*}
\begin{cases}
e^{ax}-\beta J+\mu J_{x}+\frac{1}{2}\sigma^{2}J_{xx}=0, & x<\bar{x},\\
c-J_{x}=0, &  x\ge\bar{x}. 
\end{cases}
\end{equation*}
Using the $C^{1}$ smooth-fit condition, we have 
$J(x;\Xi)=K_{1}e^{ax}+K_{2}(\bar{x})e^{\lambda_{2} x}$ with 
 \begin{equation*}
K_{1}=\frac{1}{\beta-\mu a-\frac{1}{2}\sigma^{2}a^{2}},\ K_{2}(\bar{x})=\frac{c-K_{1}ae^{a\bar{x}}}{\lambda_{2}e^{\lambda_{2}\bar{x}}}.
\end{equation*}
We have the following results:\\
a. If $K_{2}(\bar{x})\ge 0$, then $\bar{x}<\hat{x}$. This is because $K_{2}(\hat{x})<0\le K_{2}(\bar{x})$ leads to $K_{1}e^{a\hat{x}}>K_{1}e^{a\bar{x}}$.\\
b. If $K_{2}(\bar{x})<0$ then clearly $\bar{x}$ is a solution of $K_{1}ae^{ax}+K_{2}(\bar{x})\lambda_{2}e^{\lambda_{2}x}=c$. Moreover, if $K_{1}ae^{ax}+K_{2}(\bar{x})\lambda_{2}e^{\lambda_{2}x}=c$ have two different solutions $x^{(1)}<x^{(2)}$, then $\bar{x}$ equals either $x^{(1)}$ or $x^{(2)}$, $K_{2}(x^{(1)})=K_{2}(x^{(2)})$, and $x^{(1)}<\hat{x}<x^{(2)}$; if $K_{1}ae^{ax}+K_{2}(\bar{x})\lambda_{2}e^{\lambda_{2}x}=c$ has only one solution $\bar{x}$, then $\bar{x}=\hat{x}$. This is because $K_{2}(\cdot)$ is first decreasing then increasing and $\hat{x}$ is the minimum of $K_{2}(\cdot)$.

Recall that the q-functions here are
\begin{equation*}
\begin{aligned}
&q_{0}(x;\Xi)=c-J_{x}(x;\Xi),\\
&q_{1}(x;\Xi)=e^{ax}-\beta J(x;\Xi)+\mu J_{x}(x;\Xi)+\frac{1}{2}\sigma^{2}J_{xx}(x;\Xi).
\end{aligned}
\end{equation*}
Now we have three cases to discuss:\\
(1) Case $\bar{x}>\hat{x}$: In this case, $K_{2}(\bar{x})<0$ and $x^{(1)}<x^{(2)}=\bar{x}$. Then $q_{0}(x;\Xi)\ge 0$ holds for $x\le x^{(1)}$ and $x\ge x^{(2)}$. And we have $\lim\limits_{x\uparrow x^{(2)}}J_{xx}(x,\Xi)<0$, which leads to $0=\lim\limits_{x\uparrow x^{(2)}}q_{1}(x;\Xi)<\lim\limits_{x\downarrow x^{(2)}}q_{1}(x;\Xi)$. Thus $q_{1}(x;\Xi)\ge \lim\limits_{x\downarrow x^{(2)}}q_{1}(x;\Xi)>0,\forall x\ge x^{(2)}$. Therefore, the improved waiting region $\tilde{W}$ in Theorem \ref{PIT} is given by
\begin{equation*}
\tilde{W}=int\Big(\big\{x|q_{0}(x;\Xi)\ge 0,q_{1}(x;\Xi)\le 0\big\}\Big)=\{x|x<x^{(1)}\}.
\end{equation*}
(2) Case $\bar{x}<\hat{x}$ with $K_{2}(\bar{x})<0$. In this case, we have $\bar{x}=x^{(1)}<x^{(2)}$ and $q_{0}(x;\Xi)\ge 0$ holds for any $x\in\mathbb{R}$. Moreover, 
    $\lim\limits_{x\uparrow x^{(1)}}J_{xx}(x;\Xi)>0$, which yields $0=\lim\limits_{x\uparrow x^{(1)}}q_{1}(x;\Xi)>\lim\limits_{x\downarrow x^{(1)}}q_{1}(x;\Xi)$. Besides, define $\Xi^{x^{(2)}}:=(\{x|x\le x^{(2)}\},\{x|x> x^{(2)}\})$, then $\lim\limits_{x\uparrow x^{(2)}}q_{1}(x;\Xi^{x^{(2)}})=0$ leads to 
    \begin{equation*}
    e^{ax^{(2)}}-\beta\big[K_{1}e^{ax^{(2)}}+K_{2}(x^{(2)})e^{\lambda_{2}x^{(2)}}\big]+\mu c=-\lim\limits_{x\uparrow x^{(2)}}J_{xx}(x;\Xi^{x^{(2)}})>0.
    \end{equation*}
    Combining the last formula with
    \begin{equation*}
    \int_{0}^{x^{(2)}}\big[K_{1}ae^{ax}+K_{2}(\bar{x})\lambda_{2}e^{\lambda_{2}x}\big]dx>\int_{0}^{x^{(1)}}\big[K_{1}ae^{ax}+K_{2}(\bar{x})\lambda_{2}e^{\lambda_{2}x}\big]dx
    \end{equation*}
    yields $q_{1}(x^{(2)};\Xi)>0$. 
    Moreover, we have
    \begin{equation*}
    \begin{aligned}
    &e^{a\hat{x}}-\beta\big[K_{1}e^{a\hat{x}}+K_{2}(\hat{x})e^{\lambda_{2}\hat{x}}\big]+\mu c=0,\\
    &\frac{\partial}{\partial x}\big[K_{1}e^{ax}+K_{2}(x)e^{\lambda_{2}x}-cx\big]\le 0,\quad  \forall x\le\hat{x},
    \end{aligned}
    \end{equation*}
    which leads to $q_{1}(\hat{x};\Xi)<0$. Due to the monotonicity of $q_{1}$ in $[\bar{x},\infty)$, there exists $\bar{x}'\in(\hat{x},x^{(2)})$ such that $q_{1}\le 0$ for any $x\le\bar{x}'$ and $\tilde{W}=\{x|x<\bar{x}'\}$.\\
(3) Case $\bar{x}<\hat{x}$ with $K_{2}(\bar{x})\ge 0$. In this case, $q_{0}(x;\Xi)\ge 0$ holds for any $x\in\mathbb{R}$. And we have $J_{xx}(x,\Xi)>0$ for any $x\in\mathbb{R}$, which yields $0=\lim\limits_{x\uparrow \bar{x}}q_{1}(x;\Xi)>\lim\limits_{x\downarrow \bar{x}}q_{1}(x;\Xi)$. Due to the monotonicity of $q_{1}$ in $[\bar{x},\infty)$, there exists $\bar{x}'>\bar{x}$ such that $q_{1}\le 0$ for any $x\le\bar{x}'$. Note that $K_{2}(x)$ has only one root and is negative when $x$ is larger, then we have $\bar{x}<\hat{x}$ and a similar argument to (2) leads to $q_{1}(\hat{x};\Xi)<0$. Thus there exists $\bar{x}'\in(\hat{x},\infty)$ such that 
$q_{1}\le 0$ for any $x\le\bar{x}'$ and $\tilde{W}=\{x|x<\bar{x}'\}$.

Combining the analysis in three cases (1)$\sim$(3), we obtain that case (3) will turn into case (1), while cases (1) and (2) turn into each other. Moreover, for $\bar{x}$ belonging to case (1), the boundary $\bar{x}''$ after two iterations is in $(\hat{x},\bar{x})$. Then we can conclude that the iterated boundaries $\{x_{n}\}$ converges to $\hat{x}$ for any initial $x_{0}$.
\end{proof}
Thanks to the explicit expression of $V(x)$, the pair of q-functions are obtained as 
\begin{equation*}
\begin{aligned}
&q_{0}(x,t,y)=
\begin{cases}
c-\frac{a}{\beta-\mu a-\frac{1}{2}\sigma^{2}a^{2}}e^{ax}-C_{2}\lambda_{2}e^{\lambda_{2}x}, & x<\hat{x},\\
0, &  x\ge \hat{x},
\end{cases}\\
&q_{1}(x,t,y)=
\begin{cases}
0, &   x<\hat{x},\\
e^{ax}-\beta c(x-\hat{x})-\beta V(\hat{x})+\mu c, & x\ge\hat{x}. 
\end{cases}
\end{aligned}
\end{equation*}

Now, we combine {\bf Scheme-3} and the martingale loss approach in (a) to learn the optimal region. Taking advantage of the explicit expressions, we can parameterize the optimal value function and q-functions by the same profile of parameters $\theta=\{\theta_{i}\}_{i=1,2,3}$ given by
\begin{equation*}
\theta_{1}=a,\quad \theta_{2}=\lambda_{2},\quad\theta_{3}=\hat{x}.
\end{equation*}
The optimal value function and q functions are parameterized by
\begin{align*}
&V(x;\theta)=\begin{cases}
c\frac{\theta^{2}_{2}e^{\theta_{1}(x-\theta_{3})}-\theta_{1}^{2}e^{\theta_{2}(x-\theta_{3})}}{\theta_{1}\theta_{2}(\theta_{2}-\theta_{1})}, & x<\theta_{3},\\
c(x-\theta_{3})+c\frac{\theta_{1}+\theta_{2}}{\theta_{1}\theta_{2}}, & x\ge \theta_{3}.
\end{cases},\\
&q_{0}(x;\theta)=\begin{cases}
c-c\frac{\theta_{2}e^{\theta_{1}(x-\theta_{3})}-\theta_{1}e^{\theta_{2}(x-\theta_{3})}}{(\theta_{2}-\theta_{1})}, & x<\theta_{3},\\
0, & x\ge \theta_{3},
\end{cases},\\
&q_{1}(x;\theta)=\begin{cases}
0, & x<\theta_{3},\\
e^{\theta_{1}x}-\beta c(x-\theta_{3})-e^{\theta_{1}\theta_{3}}, & x\ge \theta_{3}
\end{cases}.
\end{align*}
For the martingale (\ref{m3_a}), the martingale loss is approximated by
 \begin{equation*}
 \begin{aligned}
 &\frac{1}{2}\mathbb{E}\int_{t}^{T}[M_{T-}-M_{r-}]^{2}dr\\
 \approx&\frac{1}{2}\mathbb{E}\sum\limits_{n=0}^{N-1}\Big[e^{-\beta T}V(X_{T-};\theta)-e^{-\beta t_{n}}V(X_{t_{n}-};\theta)+\sum\limits_{i=n}^{N-1}\big[H_{t_{i}}-e^{-\beta t_{i}}q_{1}(X_{t_{i}};\theta)\Delta t\big]\\&+\sum\limits_{i=n}^{N-1}\big[c_{t_{i}}-e^{-\beta t_{i}}q_{0}(X_{t_{i}-};\theta)(\xi_{t_{i+1}-}-\xi_{t_{i}-})\big]\Big]^{2}\Delta t\\
  \approx&\frac{1}{2}\mathbb{E}\!\!\sum\limits_{n=0}^{N-1}\!\Big[\!-\!e^{\!-\beta t_{n}}V(X_{t_{n}-\!};\theta)\!+\!\!\sum\limits_{i=n}^{N-1}\!\big[H_{t_{i}}\!-\!e^{\!-\beta t_{i}}q_{1}(X_{t_{i}};\theta)\Delta t\big]\!+\!\!\sum\limits_{i=n}^{N-1}\!\big[c_{t_{i}}\!-\!e^{-\beta t_{i}}q_{0}(X_{t_{i}-\!};\theta)(\xi_{t_{i+1}-\!}\!\!-\!\xi_{t_{i}-})\big]\Big]^{2}\!\Delta t.\!
 \end{aligned}
 \end{equation*}
Using stochastic gradient descent leads to 
\begin{equation}
\label{update_theta}
\begin{aligned}
\theta\leftarrow&\theta+\alpha^{\theta}l(m)\Delta t\sum\limits_{n=0}^{N-1}\Big[-\!e^{-\beta t_{n}}V(X_{t_{n}-};\theta)\!+\!\sum\limits_{i=n}^{N-1}\big[H_{t_{i}}\!-\!e^{-\beta t_{i}}q_{1}(X_{t_{i}};\theta)\Delta t\big]\\&\!+\!\sum\limits_{i=n}^{N-1}\big[c_{t_{i}}\!-\!e^{-\beta t_{i}}q_{0}(X_{t_{i}-};\theta)(\xi_{t_{i+1}-}-\xi_{t_{i}-})\big]\Big]*\Big[e^{-\beta t_{n}}\frac{\partial}{\partial \theta}V(X_{t_{n}-};\theta)+\sum\limits_{i=n}^{N-1}e^{-\beta t_{i}}\big[\frac{\partial}{\partial \theta}q_{1}(X_{t_{i}};\theta)\Delta t\\&+\frac{\partial}{\partial \theta}q_{0}(X_{t_{i}-};\theta)(\xi_{t_{i+1}-}-\xi_{t_{i}-})\big]\Big]
\end{aligned}
\end{equation}

Based on Theorem \ref{qforopjq}, we present the offline and off-policy optimal q-learning Algorithm \ref{simplified-ml-example} as below.
\begin{algorithm}[h]
\caption{  Offline (off-policy) optimal q-learning algorithm using martingale loss}
\label{simplified-ml-example}
\renewcommand{\algorithmicrequire}{\textbf{INPUT:}}
\begin{algorithmic}
\REQUIRE
Truncation time $T>0$, number of episodes $M$, number of mesh grids $N$, proper parameterization $\phi(x;\theta)$ for $\phi=V,q_{0},q_{1}$ in the optimal case, initial learning rates $\alpha^{\theta}$ and a learning rate schedule function $l(\cdot)$. 
\STATE Initialize $\theta$. Calculate the time step $\Delta t=\frac{T}{N}$ and set $t_{n}: =n\frac{T}{N}, n=0, 1, 2, \cdots, N$.
\FOR {episode $m=1,2, \cdots, M$}
\STATE Simulate a training sample path. Specifically, for each $n<N$, obtain the state-time-control triples $(X_{t_{n}}, t_{n}, \xi_{t_{n}})$ and $(X_{t_{n+1}-}, t_{n+1}, \xi_{t_{n+1}-})$, the reward/penalty $H_{t_{n}}$, and the  control costs $c_{t_{n}}$.

\STATE Update $\theta$ using (\ref{update_theta}).
\ENDFOR
\end{algorithmic}
\end{algorithm}

In the numerical implementation, the training sample is simulated by $x_{0}$ uniformly sampled in $[\theta_{3}-10,\theta_{3}+10]$, with $\theta_{3}$ denoting the current estimate of boundary during training, and the jump in each step is uniformly sampled from $[0.05,0.1]$. In order to stabilize the gradient estimation from samples, we simulate $10$ paths in each episode and use the sample mean of the gradients in (\ref{update_theta}) truncated in $[-10000,10000]$ to update $\theta$. With true values $(\mu,\sigma,a)=(0.25,1,0.1)$ and known $c=1,\beta=0.1$, the truncation time and the time step are set to $T=100$ and $\Delta t=0.02$, the initial learning rates and learning rate scheduale function are chosen as $\alpha^{\theta}=(0.00001, 0.0005, 0.01)$ and $l(m)=(1 + 0.001m + 0.009(m - 1000)1_{\{m \ge 1000\}})^{-1}$. The initial value of $\theta$ is $(0.15, 0.4, 0.5)$. We iterate $\theta$ in $2000$ episodes. The learned values of $\theta_{i},i=1,2,3$ in $2000$ episodes are shown in Figure \ref{learningtheta}. The numerical plots illustrate the satisfactory convergence performance of iterated parameters and the errors between the true value function and the learned value function, confirming the effectiveness and efficiency of our proposed RL-algorithm.

\begin{figure}[h]
\centering
\includegraphics[width=0.95\linewidth, height=1\textheight, keepaspectratio]{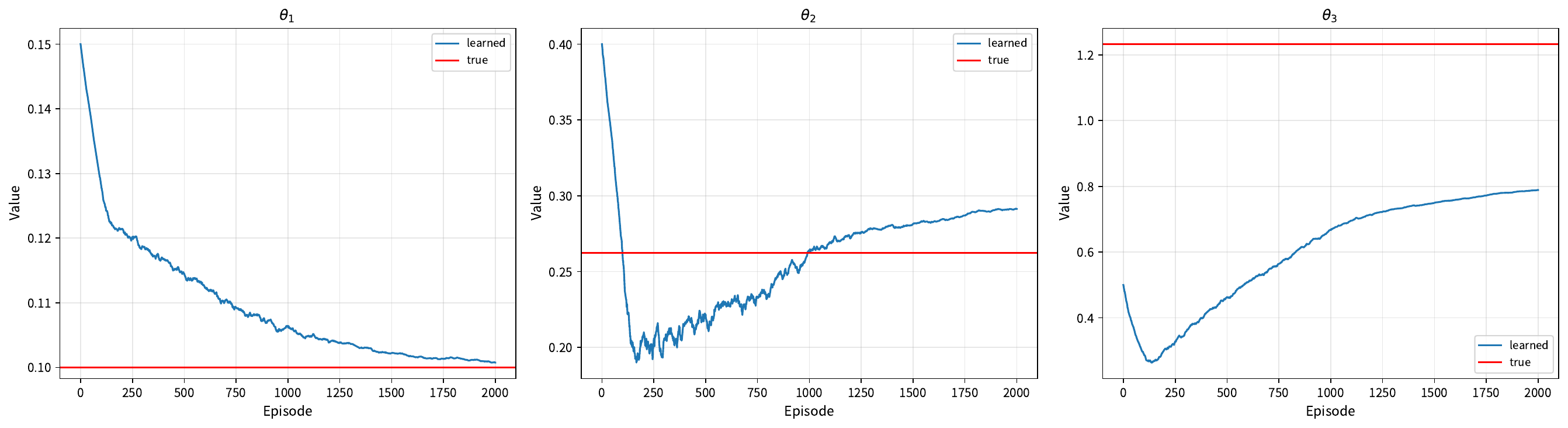}
\caption{Convergence of parameter iterations}
\label{learningtheta}
\end{figure}

\begin{figure}[H]
\centering
\includegraphics[width=0.95\linewidth, height=1\textheight, keepaspectratio]{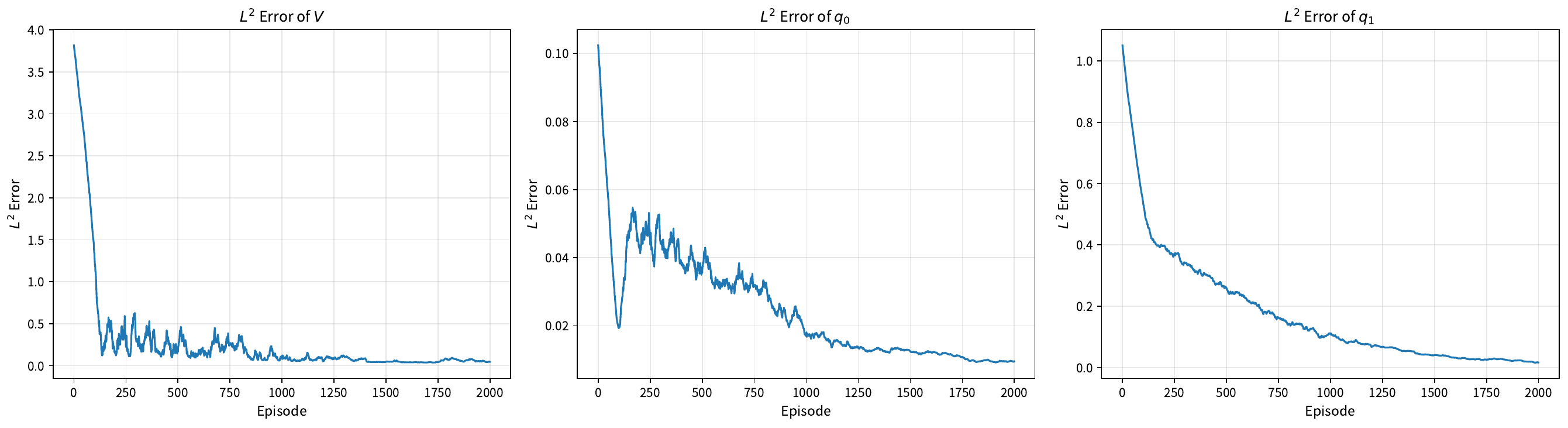}
\caption{$L^{2}$ errors during training}
\label{learningl2}
\end{figure}

\begin{figure}[H]
\centering
\includegraphics[width=0.95\linewidth, height=1\textheight, keepaspectratio]{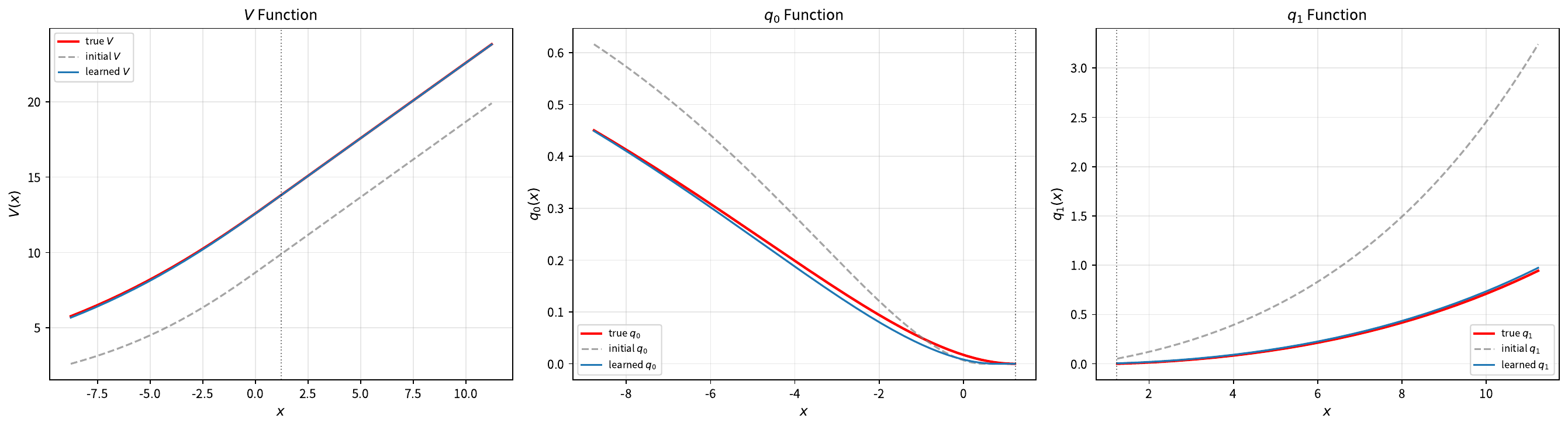}
\caption{Comparison between initial/learned/true functions}
\label{learningfunction}
\end{figure}

\small
\bibliographystyle{apalike}
\bibliography{references}
\end{document}